\numberwithin{equation}{section}
\def\beq{\begin{eqnarray}}
\def\eeq{\end{eqnarray}}
\def\beqs{\begin{eqnarray*}}
\def\eeqs{\end{eqnarray*}}
\def\mz{{\mathbb Z}}
\def\ind{\hbox{\rm ind}}
\def\ord{\hbox{\rm ord}}
\newfont{\df}{eufm10}
\newtheorem{theo}{Theorem}[section]
\newtheorem{theorem}[theo]{Theorem}
\newtheorem{lemma}[theo]{Lemma}
\newtheorem{proposition}[theo]{Proposition}
\newcommand{\N}{\mathbb N}
\newcommand{\Z}{\mathbb Z}
\def\mod{{\hbox{\rm mod}}}
\title[Minimal zero-sum sequences of length five]{Minimal zero-sum sequence of length five over finite cyclic groups of prime power order}
\author[L.-M. Xia]{Li-meng   Xia$^{1}$}
\author[Y.-L. Li]{Yuanlin Li$^{*,2}$}
\author[J.-T. Peng]{Jiangtao Peng$^3$}
\thanks{
*Corresponding author: Yuanlin Li, Department of Mathematics, Brock
University, St. Catharines, Ontario Canada L2S 3A1, Fax:(905)
378-5713;\\ E-mail: yli@brocku.ca (Y. Li) \\ }
\subjclass[2000]{Primary 11B30, 11B50, 20K01. \\ Key words
and phrases: minimal zero-sum sequences, index of sequences. }
\date{}
\begin{document}
\maketitle
\centerline{\small $^1$Faculty of Science, Jiangsu University, Zhenjiang, 212013, Jiangsu Pro., P.R. China}
\centerline{\small $^2$Department of Mathematics, Brock University, St. Catharines, ON, Canada L2S 3A1}
\centerline{\small $^3$College of Science, Civil Aviation University of China, Tianjin, 300300, P.R. China}

\def\abstractname{ABSTRACT}
\begin{abstract}
Let $G$ be a finite cyclic group. Every sequence $S$ of length $l$ over $G$ can be written in the form
$S=(x_1g)\cdot\ldots\cdot(x_lg)$ where $g\in G$ and $x_1, \ldots, x_l\in[1, \ord(g)]$, and the index $\ind(S)$
 of $S$ is defined to be the minimum of $(x_1+\cdots+x_l)/\ord(g)$ over all possible $g\in G$
 such that $\langle g \rangle =G$. Recently the second and the third authors determined the index of any minimal zero-sum sequence $S$ of length 5 over a cyclic group of a prime order where $S=g^2(x_2g)(x_3g)(x_4g)$. In this paper, we determine the index of any minimal zero-sum sequence $S$ of length 5 over a cyclic group of a prime power order.   It is shown that if $G=\langle g\rangle$ is a cyclic group of  prime power order  $n=p^\mu$  with $p \geq 7$ and $\mu\geq 2$,  and $S=(x_1g)(x_2g)(x_2g)(x_3g)(x_4g)$ with $x_1=x_2$ is a minimal zero-sum sequence with $\gcd(n,x_1,x_2,x_3,x_4,x_5)=1$, then  $\ind(S)=2$ if and only if  $S=(mg)(mg)(m\frac{n-1}{2}g)(m\frac{n+3}{2}g)(m(n-3)g)$ where $m$ is a positive integer such that $\gcd(m,n)=1$.
\end{abstract}

\vskip 3mm


\setcounter{section}{0}

\section{Introduction}

Throughout the paper $G$ is assumed to be a finite cyclic
group of order $n$ written additively.  Denote by  $\mathcal{F}(G)$, the free abelian monoid with basis $G$ and
elements of $\mathcal{F}(G)$ are called \emph{sequences} over $G$. A sequence of length $ l$ of
not necessarily distinct elements from $G$ can be written in the
form $S=(x_1g)\cdot \, \ldots \,\cdot (x_lg)$ for some $g\in G$.  $S$  is called a \emph{zero-sum
sequence} if the sum of $S$ is zero (i.e. $\sum_{ i=1}^l x_ig=0$). If $S$ is a zero-sum sequence, but no proper
nontrivial subsequence of $S$ has sum zero, then  $S$ is called a \emph{minimal zero-sum sequence}.  The index of a sequence $S$ over $G$ is defined as follows.

\begin{definition}
 For a sequence over $G$
 $$S=(x_1g)\cdot\ldots\cdot(x_lg), \,\,\, \mbox{where} \,\, 1\le x_1, \ldots, x_l \le \ord(g),$$
 the index of $S$ is defined by
 $\ind(S)=\min\{\| S \|_g \,|\,g\in G \, \mbox{with}\,\, G=\langle g \rangle\}$ where  $$\|S\|_g=\frac{x_1+\cdots+x_l}{\ord(g)}.$$
\end{definition}
\noindent Clearly, $S$ has sum zero if and only if $\ind(S)$ is an integer. There are also slightly different definitions of the index in the literature, but they are all equivalent (see Lemma 5.1.2 in \cite{Ge:09a}).

 The index of a sequence is a crucial invariant
in the investigation of (minimal) zero-sum sequences (resp. of
zero-sum free sequences) over cyclic groups. It was first addressed
by Kleitman-Lemke (in the conjecture \cite[page 344]{KL:89}),
used as a key tool by Geroldinger (\cite[page 736]{Ge:87}), and
then investigated by Gao \cite{Gao:00} in a systematical way. Since then it has received a great deal of attention (see for example \cite{CFS:99, CS:05, GG:09, GLPPW:11, Ge:09a, GeH:06, LP:13, LPYZ:10, PL:13,
P:04, SC:07, SX:13, SXL:14, X:13, XS:13, XY:10, Y:07, YZ:11}).

 A main focus of the investigation of index is to determine minimal zero-sum sequences of index 1. If $S$ is a minimal zero-sum sequence of length $|S|$ such that $|S|\leq 3$ or $|S| \geq \lfloor\frac{n}{2}\rfloor+2$, then
ind(S) = 1 (see \cite{CFS:99,SC:07,Y:07}). In contrast to that, it was shown that for each $l \mbox{ with }  5 \leq l \le \lfloor\frac{n}{2}\rfloor + 1$, there is a minimal zero-sum sequence $S$ of length $|S| = l$ with $\ind(S)\geq 2$ (\cite{SC:07, Y:07})  and that the
same is true for $l = 4$ and $\gcd(n, 6) \ne 1$ (\cite{P:04}). In  recent papers \cite{LP:13, LPYZ:10, XS:13},  the  authors proved that $\ind(S)=1$ if $|S|=4$ and $\gcd(n, 6) = 1$ when $n$ is a prime power or a product of two prime powers. When $n$ is a product of at least 3 prime powers, some partial results were  also obtained in \cite{SX:13, SXL:14, X:13}. However,  the general case is still open.


It was mentioned in \cite{PL:13}, in order to further investigate the index of a general minimal zero-sum sequence of length 4, it is helpful to determine the index of certain minimal zero-sum sequences of length 5.  It is routine to check that  if $S$ is a minimal zero-sum sequence over $G$ of length 5, then $1\le \ind(S) \le 2$.   Let $\mathsf h(S)$ be the maximal repetition  of an element in $S$. In \cite{PL:13}, the index of any minimal zero-sum sequence $S$ of length 5  over acyclic group of a prime order with $\mathsf h(S) \geq 2$ was completely determined.  In this paper, we continue the investigation on the index of minimal zero-sum sequences of length 5 over a cyclic group of prime power order.  When $ G$ is a cyclic group of  prime power order  $n=p^\mu$  with $p \geq 7$ and $\mu\geq 2$, and $S$ is a minimal zero-sum sequence over $G$  with $\mathsf h(S) \geq 2$,  we were able to determine completely the index of $S$.
Our main result is  as follows.

\begin{theorem}\label{maintheorem}
Let $G$ be a cyclic group of prime power order $n=p^\mu$ with $ p\geq 7$ and $\mu\geq2$, and  $S=(x_1g)\cdot(x_2g)\cdot(x_3g)\cdot(x_4g)\cdot(x_5g)$ be a minimal zero-sum sequence over $G$ with $\gcd(n,x_1,x_2,x_3,x_4,x_5)=1$ and $x_1=x_2$, then $\ind(S)=2$ if and only if  $S=(mg)(mg)(m\frac{n-1}{2}g)(m\frac{n+3}{2}g)(m(n-3)g)$ where $m$ is a positive integer such that $\gcd(m,n)=1$.
\end{theorem}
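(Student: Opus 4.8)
The plan is to recast the index as a minimization over unit multipliers and to exploit the repeated coordinate. For a unit $t\in(\Z/n\Z)^\times$ write $|x|_n$ for the residue of $x$ in $\{1,\dots,n-1\}$ (all our coordinates are nonzero), and set $\sigma(t)=\sum_{i=1}^5 |x_it|_n$, so that $\ind(S)=\frac1n\min_t\sigma(t)$. The elementary identity $\sigma(t)+\sigma(-t)=5n$ together with the bound $1\le\ind(S)\le 2$ shows that $\ind(S)=2$ if and only if $\sigma(t)\ne n$ for every unit $t$; this is the characterization I will verify. First I reduce to a normal form: I claim $\gcd(x_1,n)=1$ (if $p\mid x_1=x_2$ one produces a multiplier with $\sigma(t)=n$, so the index is $1$), after which replacing $g$ by $x_1g$ rescales the sequence to $(1)(1)(\beta)(\gamma)(\delta)$ with $\beta=|x_3x_1^{-1}|_n$, and so on, leaving $\ind(S)$ unchanged; the final constant $m$ is recovered as $x_1$. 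Since $\beta+\gamma+\delta\equiv-2\pmod n$ forces $\beta+\gamma+\delta\in\{n-2,\,2n-2\}$, and the value $n-2$ gives $\sigma(1)=n$, the index-$2$ hypothesis already forces $\beta+\gamma+\delta=2n-2$.

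Next I pin down the coarse shape of $(\beta,\gamma,\delta)$. Testing $t=2$ gives $\sigma(2)=4+\sum|2x|_n$, and since exactly the coordinates exceeding $n/2$ drop by $n$ under doubling, one finds $\frac1n\sigma(2)=4-E$, where $E$ is the number of $\beta,\gamma,\delta$ lying above $n/2$. The index-$2$ condition $2\le\frac1n\sigma(2)\le 3$ then forces $E\in\{1,2\}$. The case $E=1$ is impossible: two of the coordinates would be at most $\frac{n-1}2$, so the third equals $2n-2$ minus their sum, which is $<n$ only when both small ones equal $\frac{n-1}2$ and the third equals $n-1$, but then $(1)(n-1)$ is a zero-sum subsequence, contradicting minimality. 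Hence $E=2$, and after reordering the three factors I may assume $\beta<\tfrac n2<\gamma\le\delta$ with $\beta+\gamma+\delta=2n-2$.

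The heart of the argument is then a uniqueness statement: among all admissible triples (integers with $\beta+\gamma+\delta=2n-2$, $E=2$, and no forbidden subsum), only $(\beta,\gamma,\delta)=(\tfrac{n-1}2,\tfrac{n+3}2,\,n-3)$ survives the condition $\sigma(t)\ne n$ for all $t$. I prove this by the contrapositive: for every other admissible triple I construct an explicit multiplier $t$ with $2t+|\beta t|_n+|\gamma t|_n+|\delta t|_n=n$, i.e. realizing $\ind(S)=1$. The search is guided by the floor expression $\frac1n\sigma(t)=2t-\big(\lfloor\beta t/n\rfloor+\lfloor\gamma t/n\rfloor+\lfloor\delta t/n\rfloor\big)$ valid for $t<n$: one chooses $t$ slightly larger than a near-rational approximation to $n/\delta$ (respectively $n/\gamma$) so that each $x_it$ sits just past a multiple of $n$, and then adjusts $t$ by $\pm1$ according to the residue $r=n-\delta$ and the positions of $\gamma,\beta$ about $n/2$. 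Organizing this by the location of $\delta$ (close to $n$ versus away) and of $\beta$ relative to $n/3$ gives finitely many cases; the hypotheses $p\ge7$ and $\mu\ge2$ (so $n\ge49$) are used to guarantee the constructed $t$ is a unit, is genuinely smaller than $n$, and avoids the boundary coincidences that would otherwise collapse the inequalities --- for instance $\beta+\delta=\tfrac{3n-7}2=n$ holds precisely when $n=7$, which $n\ge49$ excludes. Conversely, for the distinguished triple I verify directly, using the same floor formula, that $\lfloor\beta t/n\rfloor+\lfloor\gamma t/n\rfloor+\lfloor\delta t/n\rfloor\le 2t-2$ for every unit $t\in[1,n-1]$, whence $\sigma(t)\ge 2n$ throughout and $\ind(S)=2$. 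I expect the main obstacle to be exactly this forward (uniqueness) direction: systematically producing a valid unit multiplier for each non-extremal triple while simultaneously controlling all three floor terms, and in tandem excluding the possibility that one of $\beta,\gamma,\delta$ fails to be coprime to $p$.
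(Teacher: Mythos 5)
Your setup and first reductions are sound and essentially reproduce the paper's normalization: the characterization ``$\ind(S)=2$ iff $\sigma(t)\neq n$ for every unit $t$,'' the forcing of $x_3+x_4+x_5=2n-2$, and the $t=2$ count showing exactly two coordinates lie above $n/2$ recover precisely the paper's standing assumption $S=g^2\cdot(cg)\cdot((n-b)g)\cdot((n-a)g)$ with $3\le a\le b<c<\frac n2$ and $c=a+b-2$. From that point on, however, the proposal is a plan rather than a proof, and what is missing is where essentially all of the paper's work lies. Three substantive claims are asserted but not established. First, your parenthetical claim that $p\mid x_1$ forces $\ind(S)=1$ (``one produces a multiplier'') is Proposition~\ref{P3.1}, whose proof in the paper occupies an entire subsection: three cases according to how $\gcd(n,x_3)$ compares with $x_1$, plus an averaging argument over the seven multipliers $1+tx_3^{-1}\beta$, $t\in[0,6]$, when all of $x_3,x_4,x_5$ are coprime to $p$. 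Second, your ``uniqueness statement'' --- that every admissible triple other than $\bigl(\frac{n-1}{2},\frac{n+3}{2},n-3\bigr)$ admits a unit $t$ with $\sigma(t)=n$ --- is exactly the content of Propositions~\ref{P3.2} and~\ref{P3.3}, which the paper proves through Lemmas~\ref{L4.1}--\ref{L4.9}: a long case analysis on $s=\lfloor b/a\rfloor$, on the size of $n/c$, and on residues $j$ and $r$, with explicitly constructed multipliers (and the constant need to replace a candidate $M$ or $t$ by a nearby value when it fails to be coprime to $n$). You describe the guiding heuristic and then yourself flag this direction as ``the main obstacle''; naming the obstacle is not overcoming it, and nothing in the proposal shows that the finitely many cases actually close. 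Third, the converse direction --- that the distinguished sequence has index $2$ --- is claimed via ``verify directly, using the same floor formula, for every unit $t$,'' but that verification is itself a nontrivial lemma (Proposition~\ref{P2.2}, imported from the prime case in \cite{PL:13}), not a one-line inequality.

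There is also a concrete quantitative point on which the sketch, as stated, would fail. You assert that $n\ge 49$ suffices to rule out the boundary coincidences, but the paper's generic constructions all require $n\ge 289$: for instance Lemma~\ref{L4.5} uses $\frac{289}{22}>13$ and Lemma~\ref{L4.9} uses $s\ge 32$, and these estimates are false for $n=49$ or $n=121$. For precisely the prime powers $n\in\{49,121,169\}$ the authors could not make the generic argument work and fell back on a computer search (Proposition~\ref{small n}). So an execution of your plan must either prove strictly sharper estimates than the paper's or include a separate (possibly computational) verification for those small moduli; the proposal addresses neither.
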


The paper is organized as follows. In the next section, we provide some preliminary results. In section 3, we state three main propositions and use them together with some preliminary results to give a proof for our main result. The proofs of the main propositions are given in the last section.

\section{Preliminaries}

We first prove some preliminary results which will be needed in the sequel. Let $G=\langle g\rangle$ be a cyclic group of order $n$. Suppose that  $S=(x_1g)\cdot \, \ldots \,\cdot (x_lg)$. Let  $ \|S\|_g' =\ord(g) \|S\|_g=\sum_{ i=1}^l x_i \in N_0$ and denote by $|x|_n$ the least positive residue of $x$ modulo $n$, where $n \in \N$ and $x\in \Z$.  Let $mS$ denote the sequence $(mx_1g)\cdot \, \ldots \,\cdot (mx_lg)$. Since $|g|=n$, we have  $ mS=(|mx_1|_ng)\cdot \, \ldots \,\cdot (|mx_l|_ng).$  We note that if $\gcd(n, m) = 1$, then the multiplication by $m$ is a group automorphism of $G$ and hence $\ind(S)=\ind(mS)$. Two sequences $S$ and $S'$ are called equivalent, denoted by $S\sim S'$, if $S=mS'$ for some $m$ with $\gcd(n, m) = 1$. Clearly, if $S\sim S'$, then $\ind(S)=\ind(S')$. For all real numbers $a < b$, define $[a,b]=\{k\in\mz | a\leq k\leq b\}$.

From now on we always assume that  $G=\langle g\rangle$ is a cyclic group of order $n=p^\mu $ with $ p\geq 7$ and $\mu\geq2$, and $S=(x_1g)\cdot \, \ldots \,\cdot (x_lg)$ is a minimal zero-sum sequence over $G$. We remark that if  $\gcd(n,x_1,x_2,x_3,x_4,x_5)=p^w >1$, let $h=p^wg$ and $x'_i= x_i/p^w$ for $1\leq i\leq 5$. Then a minimal zero-sum sequence  $S$ over $G$ can be rewritten as follows:
$$S=(x'_1h)\cdot \, \ldots \,\cdot (x'_lh),$$
which is a minimal zero-sum sequence over $H=\langle h\rangle$. Note that $H$ is a cyclic group of prime power order with $|H| | |G|$ and $\gcd(n,x'_1,x'_2,x'_3,x'_4,x'_5)=1$. In what follows we may always assume that $\gcd(n,x_1,x_2,x_3,x_4,x_5)=1$. Let $U(n) $ denote the unit group of $n=p^\mu$, i.e. $U(n)=\{ m| 1\leq m \leq n-1, \gcd(m, n)=1\}$. It is well known that $1+t\alpha\in U(n)$ for any $1<\alpha=p^\lambda<n$.

\begin{lemma}\label{L2.1} 
Let $n=p^{\mu}$ with $\mu\geq 2$ and  $\alpha=\frac{n}{p}$. If $v$ is an integer such that $1\leq v\leq n-1$ and $\gcd(v, p)=1$, then there exists $y=1+t\alpha\in U(n)$ such that $|vy|_n<\frac{n}{p}$.
\end{lemma}

\begin{proof}
Note that  $y=1+t\alpha\in U(n)$ for any $0\leq t\leq p-1$. There exists a $ y_1=1+t\alpha$ such that $vy_1 <n$, but $vy > n$ where $y=1+(t+1)\alpha$. Thus $|vy|_n< \alpha=\frac{n}{p}$.
\end{proof}

The following proposition is a generalization of \cite[Proposition 2.2]{PL:13}.

\begin{proposition}\label{P2.2}
Let   $G=\langle g\rangle$ be a cyclic group of  order $n=p^\mu$ with $p\ge 5$. If $S=g^2\cdot (\frac{p^{\mu}-1}{2}g)\cdot (\frac{p^{\mu}+3}{2}g)\cdot ((p^{\mu}-3)g)\in \mathcal{F}(G)$, then $\ind(S)=2$.
\end{proposition}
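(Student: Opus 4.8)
The plan is to evaluate the index directly from the formula $\ind(S)=\min\{\frac{1}{n}\sum_{i=1}^{5}|mx_i|_n : \gcd(m,n)=1\}$, where $(x_1,\dots,x_5)=(1,1,\frac{n-1}{2},\frac{n+3}{2},n-3)$ and $n=p^{\mu}$. Taking $m=1$ gives $\|S\|_g=\frac{1}{n}(1+1+\frac{n-1}{2}+\frac{n+3}{2}+(n-3))=\frac{2n}{n}=2$, so $\ind(S)\le 2$ and the whole task is the reverse inequality $\ind(S)\ge 2$. First I would record that each $x_i$ lies in $[1,n-1]$ and is $\not\equiv 0\pmod n$, so for every unit $m$ all five residues $|mx_i|_n$ lie in $[1,n-1]$; since $S$ is a zero-sum sequence, $\sum_{i}|mx_i|_n\equiv m\sum_i x_i\equiv 0\pmod n$. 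Hence $\sum_i|mx_i|_n$ is a multiple of $n$ lying in $[5,5n-5]$, i.e. it belongs to $\{n,2n,3n,4n\}$, and it suffices to show that the value $n$ is never attained.

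Next I would argue by contradiction: assume $\sum_i|mx_i|_n=n$ for some unit $m$, normalized to $m\in[1,n-1]$. Write $u=|m|_n=|mx_1|_n=|mx_2|_n$ and $r_i=|mx_i|_n$ for $i=3,4,5$, so that $2u+r_3+r_4+r_5=n$. The heart of the matter is to exploit the congruences peculiar to this sequence: $x_3+x_4\equiv 1$, $2x_3\equiv -1$, $2x_4\equiv 3$, and $x_5\equiv -3\pmod n$. Since $r_3+r_4<n$ and $r_3+r_4\equiv m(x_3+x_4)\equiv u\pmod n$ with $u\in[1,n-1]$, I would conclude $r_3+r_4=u$, and then $r_5=n-3u$, which forces $u\le\frac{n-1}{3}$.

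Finally I would pin down $r_3$ from $2r_3\equiv m\cdot 2x_3\equiv -u\pmod n$. As $2r_3$ is even and lies in $[2,2n-2]$ with $2r_3\equiv n-u\pmod n$, and as $n$ is odd, the parity of $u$ forces either $2r_3=n-u$ (when $u$ is odd) or $2r_3=2n-u$ (when $u$ is even). In the first case $r_4=u-r_3=\frac{3u-n}{2}$, so $r_4\ge 1$ demands $u\ge\frac{n+2}{3}$; in the second $r_4=\frac{3u-2n}{2}$, demanding $u\ge\frac{2n+2}{3}$. Each bound contradicts $u\le\frac{n-1}{3}$, so no unit $m$ can give $\sum_i|mx_i|_n=n$. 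Therefore $\ind(S)\ge 2$, and with the upper bound this yields $\ind(S)=2$.

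The step I expect to be the main obstacle is the last one: translating the single congruence $2r_3\equiv -u\pmod n$ into the two exact candidate values for $r_3$ relies on $n=p^{\mu}$ being odd (so that $2$ is invertible and the parity split is clean), and one must then check that both resulting lower bounds on $u$ genuinely clash with $u\le\frac{n-1}{3}$. The hypothesis $p\ge 5$ enters only to guarantee $n\ge 5$, which ensures $x_4=\frac{n+3}{2}\le n-1$ so that all the coefficients are honest residues in $[1,n-1]$ and the argument does not degenerate.
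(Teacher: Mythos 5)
Your proof is correct, and it takes a genuinely different route from the paper, because the paper does not actually contain an argument for this proposition: its entire proof is the single sentence that the prime-order case was established in \cite{PL:13} and that ``the same proof works for the prime power case.'' The cited proof is a direct verification, checking $\|mS\|_g' \ge 2n$ for every unit $m$ by computing the residues $|mx_i|_n$ case by case (parity of $m$, size of $m$ relative to $n/3$, etc.). You instead argue by contradiction, and every step checks out: assuming $2u+r_3+r_4+r_5=n$ with $u=|m|_n$ and $r_i=|mx_i|_n$, the congruence $x_3+x_4\equiv 1 \pmod n$ forces $r_3+r_4=u$ (legitimate since $r_3+r_4=n-2u-r_5\le n-3<n$), then $x_5\equiv -3$ gives $r_5=n-3u$ and hence $u\le\frac{n-1}{3}$; finally $2r_3\equiv -u\pmod n$ plus the oddness of $n$ pins $2r_3$ to $n-u$ ($u$ odd) or $2n-u$ ($u$ even), and in either case $r_4=u-r_3\ge 1$ forces $u\ge\frac{n+2}{3}$, contradicting $u\le\frac{n-1}{3}$. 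Your approach buys two things. First, it makes the result self-contained, which matters here since the reader of this paper otherwise never sees the argument. Second, it nowhere uses that $n$ is a prime power or a prime --- only that $n$ is odd and $n\ge 5$ (oddness for the parity split and for $\frac{n\pm1}{2},\frac{n+3}{2}$ to be integers; $n\ge 5$ so that $\frac{n+3}{2}\le n-1$) --- so it proves the statement for every odd $n\ge 5$ at once, subsuming both the prime case of \cite{PL:13} and the prime-power case asserted in the paper, and confirming your closing observation that $p\ge 5$ enters only through $n\ge 5$.
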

\begin{proof}
This lemma was proved in \cite{PL:13} for $n=p$ and the same proof works for the prime power case.
\end{proof}


\begin{proposition}\label{P2.3}
Let   $G$ be a cyclic group of  prime power order $n=p^{\mu}$ with $p\geq 5$ and $\mu\geq2$, and  $S\in \mathcal F(G)$ be a minimal zero-sum sequence of length $5$. If $\mathsf h(S) \ge 3$, then $\ind(S)=1$.
\end{proposition}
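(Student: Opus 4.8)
The plan is to reduce everything, by multiplication by a unit, to a normal form in which the thrice‑repeated element has a controllable coefficient, and then to exhibit a generator realizing $\|\cdot\|=1$. Since $\mathsf h(S)\ge 3$, some coefficient $v$ occurs at least three times, and I would distinguish two cases according to whether $\gcd(v,p)=1$ or $p\mid v$.

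\emph{The unit case $\gcd(v,p)=1$.} Here $v\in U(n)$, and after replacing $S$ by $v^{-1}S$ (which preserves $\ind$) I may assume the repeated element is $g$, so $S=g^{3}\cdot(bg)\cdot(cg)$ with $b,c\in[1,n-1]$ and $3+b+c\equiv 0\pmod n$. Applying minimality to the proper subsequences $g^{k}(bg)$ and $g^{k}(cg)$ for $k\in\{1,2,3\}$ forces $b,c\notin\{n-1,n-2,n-3\}$. Since $b+c\equiv -3\pmod n$ and $b+c\in[2,2n-2]$, only $b+c=n-3$ or $b+c=2n-3$ can occur; the latter forces $\{b,c\}=\{n-1,n-2\}$, which is excluded. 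Hence $b+c=n-3$, so $\|S\|_g=(3+b+c)/n=1$ and $\ind(S)=1$. The sub‑cases $\mathsf h(S)\in\{4,5\}$ are just the specializations $b=1$ and $b=c=1$, the last of which would give $n=5$, contradicting $\mu\ge 2$.

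\emph{The case $p\mid v$.} I would first rule out high multiplicity. If $\mathsf h(S)=5$ then the global gcd equals $\gcd(n,v)$, contradicting $p\mid v$; if $\mathsf h(S)=4$, say $S=(vg)^{4}(wg)$, then $4v+w\equiv0$ and $p\mid v$ give $p\mid w$, again contradicting $\gcd(n,x_1,\dots,x_5)=1$. Thus $\mathsf h(S)=3$ and $S=(vg)^{3}(w_1g)(w_2g)$. From $3v+w_1+w_2\equiv0$ and $p\mid v$ one gets $w_1+w_2\equiv0\pmod p$, and since the two $w_i$ cannot both be divisible by $p$ (global gcd) this forces $w_1,w_2\in U(n)$. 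Multiplying by a unit I may normalize $v=p^{a}$ with $1\le a\le\mu-1$; writing $n'=p^{\mu-a}$, one has $|mp^{a}|_n=p^{a}|m|_{n'}$ for every unit $m$. Choosing $m$ in the coset $\{m:mu\equiv1\pmod{n'}\}$ (where $v=p^au$, so here $u=1$) makes the $v$‑contribution equal to its minimum $3p^{a}$, and the multiplier identity then reduces the goal to selecting the ``high part'' of $m$ inside this coset so that $|mw_1|_n+|mw_2|_n=n-3p^{a}$, which yields $\|mS\|=1$.

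\textbf{Main obstacle.} The crux is exactly this last step. Inside the coset the low residues $|mw_i|_{n'}$ are fixed, while the high parts $j_i\in[0,p^{a}-1]$ of $|mw_i|_n$ vary as $j_i\equiv j_i^{0}+s\rho_i\pmod{p^{a}}$, where $s$ parametrizes the coset and $\rho_i=|w_i|_{p^{a}}\in U(p^a)$; the congruence $3p^a+w_1+w_2\equiv0$ turns the target into hitting one prescribed value $j_1+j_2=p^{a}-Q$ of the sawtooth sum $\Sigma(s)=[(j_1^{0}+s\rho_1)\bmod p^{a}]+[(j_2^{0}+s\rho_2)\bmod p^{a}]$. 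The difficulty is that $\Sigma(s)$ lies in a single residue class modulo $p^{a}$ and need not be surjective onto $[0,2p^{a}-2]$, so the issue is to force the \emph{small} representative $p^{a}-Q$ rather than $2p^{a}-Q$. Here I would exploit the symmetry $|x|_n+|-x|_n=n$, which gives $f(m)+f(-m)=5n$ for $f(m)=3|mv|_n+|mw_1|_n+|mw_2|_n$, so each pair $\{m,-m\}$ contributes exactly one value $\le 2n$; combining this with an analysis of $\Sigma$ (using that $\rho_1,\rho_2$ are units) should pin down a coset element with $\Sigma(s)<p^{a}$, completing the proof. Ensuring attainability of the small value, rather than merely the correct residue class, is the step I expect to require the most care.
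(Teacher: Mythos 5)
Your Case 1 (repeated coefficient $v$ coprime to $p$) is complete and correct, and your reduction of Case 2 to $\mathsf h(S)=3$ with $S=(p^ag)^3\cdot(w_1g)\cdot(w_2g)$, $w_1,w_2\in U(n)$, $1\le a\le \mu-1$, is also fine. The genuine gap is exactly the step you flag at the end: you never prove that some $m$ in the coset $m\equiv 1\pmod{p^{\mu-a}}$ gives $|mw_1|_n+|mw_2|_n=n-3p^a$ rather than $2n-3p^a$. The symmetry $f(m)+f(-m)=5n$ cannot close it: $-m$ lies in the opposite coset $-1\pmod{p^{\mu-a}}$, where the three repeated terms contribute $3(n-p^a)$, so the pairing only shows that exactly one of $f(m),f(-m)$ lies in $\{n,2n\}$, and $2n$ is precisely the value you must exclude, since index $1$ requires hitting $n$ exactly (a value of $2n$ only certifies $\ind(S)\le 2$, which is vacuous for length $5$). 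So as written the argument stops short of a proof. For what it is worth, the gap is fillable inside your own setup: writing $r_i=|w_i|_{p^{\mu-a}}$, every $m=1+sp^{\mu-a}$ satisfies $f(m)=3p^a+r_1+r_2+p^{\mu-a}\Sigma(s)\in\{n,2n\}$, so $\Sigma(s)$ takes only the two values $\sigma_0$ and $\sigma_0+p^a$, where $\sigma_0=(n-3p^a-r_1-r_2)/p^{\mu-a}\ge 0$ (because $3p^a\le 3n/p$ and $r_1+r_2<2p^{\mu-a}\le 2n/p$, using $p\ge5$); on the other hand each sawtooth $s\mapsto (q_i+s\rho_i)\bmod p^a$ is a bijection of $[0,p^a-1]$, so the average of $\Sigma(s)$ over the coset is exactly $p^a-1<\sigma_0+p^a$, whence some $s$ attains $\sigma_0$ and $f(m)=n$.

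You should compare this with the paper's proof, which avoids all of this bookkeeping. For $x_1=x_2=x_3=p^s>1$ the paper picks just \emph{one} unit coefficient, say $x_4$ (one of $x_4,x_5$ must be a unit by the gcd assumption), and uses the pigeonhole Lemma~2.1 to find $y=1+tp^{\mu-1}\in U(n)$ with $|yx_4|_n<n/p$; such $y$ automatically fixes $x_1$ because $p^{\mu-1}x_1\equiv 0\pmod n$. Then $\|yS\|_g'<3\cdot\tfrac{n}{p}+\tfrac{n}{p}+n<2n$ (here $p\ge 5$ is used), and since the sum of a zero-sum sequence is a positive multiple of $n$, it must equal $n$. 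In other words, one never needs to force an exact target value on two coefficients simultaneously: making four of the five coefficients at most $n/p$ while leaving the fifth untouched already forces $\|yS\|_g'=n$. That single observation is what your proposal is missing, and it replaces your entire coset analysis.
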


\begin{proof}
If $\mathsf h(S)\ge 4$, then we may assume that $x_1=x_2=x_3=x_4=\gcd(n,x_1)$. So we have that  $\|S\|_g'=\sum_{i=1}^5x_i=4x_1+x_5 < 2n$. Therefore $\ind(S)=1$.

If $\mathsf h(S)=3$, then we may assume that $x_1=x_2=x_3=\gcd(n,x_1)$. If $x_1=1$, then $x_4, x_5\leq n-4$ and $\|S\|_g'=\sum_{i=1}^5x_i\leq 3+2(n-4)<2n$, so we infer that  $\|S\|_g'=\sum_{i=1}^5x_i=n$ and $\ind(S)=1$.

If $x_1=p^s>1$, then $\gcd(n,x_4)=1$. Let $\alpha=p^{\mu-1}$. Then by Lemma~\ref{L2.1}  there exists $y=1+t\alpha\in U(n)$ such that $|yx_4|_n<\alpha$. Since $x_1\leq \alpha$ and $y$ fixes $x_1$ (i.e. $|yx_1|_n=|x_1|_n$), we have $\|yS\|_g'=|yx_1|_n+\cdots+|yx_5|_n < 4\alpha+x_5<2n$, so $\ind(S)=1$ and we are done.
\end{proof}

By using a computer search, we were able to verify the following proposition.

\begin{proposition}\label{small n}
Let   $G=\langle g\rangle$ be a cyclic group of  prime power order $n=p^{\mu}$ with $p\geq 5$ and $\mu\geq2$, and  $S=g^2\cdot(x_3g)\cdot(x_4g)\cdot(x_5g)$ be a minimal zero-sum sequence of length $5$. If $n < 289$, then $\ind(S)=2 $ if and only if $S=g^2\cdot (\frac{n-1}{2}g)\cdot (\frac{n+3}{2}g)\cdot ((n-3)g)$.
\end{proposition}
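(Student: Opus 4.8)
The plan is to treat this as an explicit finite verification, since the hypotheses pin down only finitely many groups and, for each, finitely many candidate sequences. First I would record that $p \geq 5$, $\mu \geq 2$ and $n = p^\mu < 289$ force $n \in \{25, 49, 121, 125, 169\}$, so the statement reduces to checking five explicit groups. The ``if'' direction is already free: by Proposition~\ref{P2.2} the distinguished sequence $g^2 \cdot (\frac{n-1}{2}g)\cdot(\frac{n+3}{2}g)\cdot((n-3)g)$ has index $2$ for every $n = p^\mu$ with $p \geq 5$, so only the ``only if'' (uniqueness) direction remains to be settled in each of the five cases.

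For a fixed $n$ I would enumerate the sequences $S = g^2 \cdot (x_3 g)\cdot(x_4 g)\cdot(x_5 g)$ by ranging over unordered triples $1 \le x_3 \le x_4 \le x_5 \le n-1$ subject to the zero-sum condition $2 + x_3 + x_4 + x_5 \equiv 0 \pmod n$. Minimality can be imposed cheaply: since $S$ itself has sum zero, a subsequence $T$ is zero-sum exactly when its complement $S \setminus T$ is, so among proper subsequences it suffices to rule out those of length $1$ and $2$ (these automatically handle their length-$4$ and length-$3$ complements). Length one gives only $x_i \not\equiv 0$, i.e.\ $x_i \in [1,n-1]$; length two gives $2 \not\equiv 0 \pmod n$ (automatic as $n \ge 25$), $1 + x_i \not\equiv 0$ (i.e.\ $x_i \ne n-1$), and $x_i + x_j \not\equiv 0$ for the three pairs among $\{x_3,x_4,x_5\}$. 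This reduces minimality to a handful of inequalities per triple.

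For each surviving minimal sequence I would compute the index directly from the definition. Since $mS$ is again zero-sum for every unit $m$, the quantity $\|mS\|_g' = \sum_{i=1}^5 |mx_i|_n = 2\,|m|_n + |mx_3|_n + |mx_4|_n + |mx_5|_n$ is a positive multiple of $n$, and as a length-$5$ minimal zero-sum sequence satisfies $1 \le \ind(S) \le 2$, this multiple is either $n$ or $2n$. Hence $\ind(S) = 1$ exactly when some $m \in U(n)$ attains $\|mS\|_g' = n$, and otherwise $\ind(S) = 2$. So for each candidate I loop over $m \in U(n)$, evaluate the residue sum, and flag the sequence as index $2$ precisely when the minimum over $m$ equals $2n$. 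The claim to confirm is that in each of the five groups the unique flagged sequence is $\{1,1,\frac{n-1}{2},\frac{n+3}{2},n-3\}$.

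The main obstacle is not conceptual but one of completeness and trust: this is a brute-force search---on the order of $n^2$ triples, each tested against the $|U(n)| = n - n/p$ units---which is immediate by machine but infeasible by hand for $n = 121, 125, 169$. The burden is therefore to guarantee that the triple-enumeration is genuinely exhaustive and that the index routine is faithful to the definition. As safeguards I would verify independently that the distinguished sequence is correctly flagged (consistent with Proposition~\ref{P2.2}), confirm that every flagged sequence meets the minimality inequalities above, and use the invariance of the index under the equivalence $S \sim mS$ with $m \in U(n)$ to spot-check that equivalent candidates receive identical computed indices. Agreement across all five values of $n$ then certifies the stated equivalence.
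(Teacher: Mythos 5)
Your proposal is correct and takes essentially the same approach as the paper: the paper's entire proof of this proposition is the statement that it was verified ``by using a computer search,'' which is precisely the finite, machine-checked enumeration over $n \in \{25, 49, 121, 125, 169\}$ that you lay out (your complementation trick for minimality and the loop over $m \in U(n)$ computing $\min_m \sum_i |mx_i|_n$ are a faithful implementation of the definitions). The only nitpick is that for an individual unit $m$ the sum $\|mS\|_g'$ can be $3n$ or $4n$; it is the \emph{minimum} over $m$ that lies in $\{n,2n\}$, which is what your flagging rule actually uses, so the algorithm is sound as stated.
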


\section{Main Propositions}

In this section we present three main propositions and then use them together with some preliminary results to provide a proof for our main theorem. In terms of Proposition~\ref{P2.3},  from now on we may always assume that  $\mathsf h(S)=2$ and  $x_1=x_2=\gcd(n,x_1)$. The following proposition takes care of the case when $x_1>1$.

\begin{proposition}\label{P3.1}
Let   $G=\langle g\rangle$ be a cyclic group of  prime power order $n=p^{\mu}$ with $p\geq 7$ and $\mu\geq2$, and  $S=(x_1g)\cdot(x_2g)\cdot(x_3g)\cdot(x_4g)\cdot(x_5g)$ be a minimal zero-sum sequence of length $5$. If $x_1(=x_2=\gcd(n,x_1)) >1 $, then $\ind(S)=1 $.
\end{proposition}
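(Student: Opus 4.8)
The plan is to reduce the whole statement to a single existence assertion about multipliers and then attack it by averaging. Since $S$ is a minimal zero-sum sequence of length $5$ we have $1\le\ind(S)\le 2$, and for every unit $m\in U(n)$ the quantity $\|mS\|_g'=\sum_{i=1}^5|mx_i|_n$ is a positive multiple of $n$ (it is congruent to $m\sum x_i\equiv 0\pmod n$). Hence $\ind(S)=1$ if and only if $\|mS\|_g'=n$ for some $m$, which, because $\|mS\|_g'$ is a positive multiple of $n$, happens exactly when $\|mS\|_g'<2n$ for some $m$. So the entire task is to \emph{produce a unit $m$ with $\sum_{i=1}^5|mx_i|_n<2n$}. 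Record the structural data: since $x_1=x_2=\gcd(n,x_1)>1$ divides $n=p^\mu$, we have $x_1=x_2=p^s$ with $1\le s\le\mu-1$, so $p^s\le n/p$; and since $\gcd(n,x_1,\dots,x_5)=1$ while $x_1,x_2$ are divisible by $p$, at least one of $x_3,x_4,x_5$ is prime to $p$. Let $k\in\{1,2,3\}$ be the number of those three that are prime to $p$.

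The averaging engine uses the multipliers $y_t=1+t\frac{n}{p}\in U(n)$, $0\le t\le p-1$, already present in Lemma~\ref{L2.1}. A direct computation shows that $y_t$ fixes every term divisible by $p$ (in particular $x_1,x_2$), while for a term $x$ prime to $p$ the values $|y_tx|_n$, as $t$ runs over $0,\dots,p-1$, are exactly the $p$ numbers $|x|_{n/p}+j\frac{n}{p}$ with $0\le j\le p-1$, each attained once. Summing over $t$ and dividing by $p$ gives, for such an $x$, the average $|x|_{n/p}+\frac{n(p-1)}{2p}$, while a $p$-divisible term contributes its unchanged value. Consequently
\[
\frac1p\sum_{t=0}^{p-1}\|y_tS\|_g'
= 2p^s
+ \sum_{\substack{3\le i\le 5\\ p\,\nmid\, x_i}}\Big(|x_i|_{n/p}+\tfrac{n(p-1)}{2p}\Big)
+ \sum_{\substack{3\le i\le 5\\ p\,\mid\, x_i}}|x_i|_n .
\]
If this average is $<2n$, then some $y_tS$ satisfies $\|y_tS\|_g'<2n$; being a positive multiple of $n$ it equals $n$, and $\ind(S)=1$.

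The case $k=3$ is where the hypothesis $p\ge 7$ enters and where the argument closes cleanly. Here the last sum is empty, each $|x_i|_{n/p}<n/p$, and $2p^s\le 2n/p$, so the average is strictly less than $\frac{2n}{p}+\frac{3n}{p}+\frac{3n(p-1)}{2p}=n\cdot\frac{3p+7}{2p}$, and $\frac{3p+7}{2p}\le 2$ precisely when $p\ge 7$. Thus the average is $<2n$ and $\ind(S)=1$. (For $p=5$ this bound reads $\frac{22}{10}n>2n$, which is exactly why the prime must be at least $7$.)

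The remaining cases $k=1,2$ are the heart of the matter and the expected main obstacle. When one or two of $x_3,x_4,x_5$ are divisible by $p$, the multipliers $y_t$ leave those terms untouched, so the raw average exceeds $2n$ as soon as such a term is large; the averaging must therefore be preceded by a unit $m$ that shrinks the $p$-divisible terms. Replacing $S$ by $mS$ (which preserves $\ind(S)$ and all the $p$-divisibilities) and then averaging over the $y_t$ reduces the goal, after discarding the automatically small contributions $|mx_i|_{n/p}<n/p$ of the $p$-coprime terms, to finding a unit $m$ that makes the weighted sum $2|mp^s|_n+\sum_{\,3\le i\le 5,\,p\mid x_i}|mx_i|_n$ smaller than an explicit bound of order $n$. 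The delicate tension is that forcing $|mp^s|_n$ to stay small requires $m\equiv 1\pmod{p^{\mu-s}}$, whereas shrinking a term $x_5=p^av$ requires choosing $m$ modulo $p^{\mu-a}$ to make $|mv|_{p^{\mu-a}}$ small, and these congruence conditions compete, so one cannot in general satisfy both at once. Resolving this is the crux: the plan is to exploit the minimal zero-sum relation $2p^s+x_3+x_4+x_5\equiv 0\pmod n$ to control the $p$-divisible terms jointly with $x_1,x_2$, and to reduce the analysis of the $p$-divisible terms to the lower-order group $\langle pg\rangle\cong\Z/p^{\mu-1}$ so as to run an induction on $\mu$, with the small-modulus range $n<289$ supplied by Proposition~\ref{small n} and the high-repetition situations already cleared by Proposition~\ref{P2.3}.
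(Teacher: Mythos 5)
Your reduction of the statement to producing a unit $m$ with $\sum_{i=1}^5|mx_i|_n<2n$ is correct and is exactly the paper's starting point, and your averaging engine is sound: $y_t=1+t\frac{n}{p}$ fixes every $p$-divisible term, and for $p\nmid x$ the values $|y_tx|_n$ run exactly once through $|x|_{n/p}+j\frac{n}{p}$, $0\le j\le p-1$. The resulting bound $n\cdot\frac{3p+7}{2p}\le 2n$ for $p\ge 7$ correctly disposes of your case $k=3$, where all of $x_3,x_4,x_5$ are coprime to $p$; this matches (and arguably streamlines) the paper's treatment of that same case, in which $x_3$ is first pushed below $n/p$ via Lemma~\ref{L2.1} and the average is then taken over only seven multipliers $1+tx_3^{-1}\beta$. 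But the cases $k=1,2$ --- some $x_i$ with $i\in\{3,4,5\}$ divisible by $p$ --- are not proved: what you offer there is a plan, explicitly flagged as unresolved, so the proposal establishes only a proper subcase of the proposition.

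Moreover, the plan as sketched would not go through. Proposition~\ref{small n} concerns sequences with $x_1=x_2=1$, so it cannot serve as a base case in a situation where $x_1=p^s>1$; and ``reducing to $\langle pg\rangle\cong\Z/p^{\mu-1}$'' has no evident meaning here, since only the $p$-divisible terms of $S$ lie in that subgroup and minimality does not descend to any natural shorter sequence over it, so there is no workable induction on $\mu$. The paper closes exactly this gap not by induction but by a three-way comparison of $x_1=p^s$ with $d=\gcd(n,x_3)$ (taking $p\mid x_3$, say). If $x_1>d$, then $pd\mid x_1$, so the multipliers $1+t\frac{n}{pd}$ fix $x_1,x_2$ while one of them (Lemma~\ref{L2.1} applied modulo $n/d$) pushes $x_3$ below $\frac{n}{p}$; a further multiplier $1+t\frac{n}{p}$ then shrinks the $p$-coprime term $x_4$, giving $\sum x_i\le\frac{4n}{p}+x_5<2n$. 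If $x_1<d$, one first normalizes $x_3=d$ exactly by a unit and then shrinks the new $x_1$ with multipliers $1+t\frac{n}{d}$, which fix $x_3$, and proceeds as before. If $x_1=d$, minimality forces $x_3=n-kx_1$ with $k\ge 3$; after repeatedly replacing $S$ by $2S$ one may assume $kx_1>\frac{n}{2}$, and a direct estimate using $k\ge3$ and $p\ge7$ gives $\sum x_i<2n$. These devices --- multipliers that fix one level of the $p$-adic filtration while shrinking another, the exact normalization $x_3=\gcd(n,x_3)$, and the doubling trick combined with the minimality bound $k\ge3$ --- are precisely what resolves the ``competition'' of congruence conditions that you correctly identify as the crux, and none of them is supplied by your proposal.
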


Next we assume that $x_1=x_2=\gcd(n,x_1)=1$ and   $S=g^2\cdot(x_3g)\cdot(x_4g)\cdot(x_5g)$ such that $2+x_3+x_4+x_5=2n$. The following two propositions are crucial to prove the main theorem.

\begin{proposition}\label{P3.2}
Let $G$ be a cyclic group of order $n$ with $n=p^\mu\geq 289, p\geq 7$ and $\mu\geq2$. Let $S=g^2\cdot(cg)\cdot((n-b)g)\cdot((n-a)g)$ be a minimal zero-sum sequence with $4\leq a\leq b<c<\frac{n}{2}$ and $c=b+a-2$. Then $\ind(S)=1$.
\end{proposition}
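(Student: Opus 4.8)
The plan is to show that some multiplier $m\in U(n)$ satisfies $\|mS\|_g'=n$; since $\ind(S)\ge 1$ always, this forces $\ind(S)=1$. I first reduce the problem to a clean inequality. For any $m\in U(n)$ we have $|m(n-a)|_n=n-|ma|_n$ and $|m(n-b)|_n=n-|mb|_n$ (both differences are legitimate because $\gcd(m,n)=1$ and $0<a,b<n$ force $ma,mb\not\equiv 0$), while the relation $c=a+b-2$ gives $mc\equiv |ma|_n+|mb|_n-2m\pmod n$. Writing $A=|ma|_n$ and $B=|mb|_n$ and substituting, a short computation collapses the sum of the five residues to
\[
\|mS\|_g'=2n+\bigl(|mc|_n-(A+B-2m)\bigr),
\]
and since $|mc|_n-(A+B-2m)$ is the multiple of $n$ needed to bring $A+B-2m$ into $[1,n-1]$, one checks that $\|mS\|_g'=n$ exactly when $A+B-2m\ge n$. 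Thus the proposition is equivalent to the existence of some $m\in U(n)$ with
\begin{equation}
|ma|_n+|mb|_n\ \ge\ n+2m. \tag{$\ast$}
\end{equation}

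I would then recast $(\ast)$ in its dual ``small residue'' form: setting $r_a=|m(n-a)|_n$ and $r_b=|m(n-b)|_n$, inequality $(\ast)$ reads precisely $r_a+r_b+2m\le n$. Hence the goal is to find a multiplier $m$ that simultaneously pushes both elements $n-a$ and $n-b$ to small residues while keeping $2m$ within the remaining budget. This is a two-target simultaneous approximation problem modulo $n=p^\mu$, and the plan is to produce $m$ explicitly. The natural candidates are $m$ near $n/a$ and $m$ near $n/b$ (so that $ma$, resp.\ $mb$, lands just below $n$, making its residue large), supplemented by a search over a suitable range of admissible multipliers. The prime-power structure and Lemma~\ref{L2.1} let me first drive one of the two elements to a residue below $n/p$ and then adjust by multipliers of the form $1+t\alpha$ to control the second without destroying the first. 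I would organize the argument into cases according to $\gcd(a,p)$, $\gcd(b,p)$ and the relative sizes of $a$ and $b$.

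The main obstacle is exactly this simultaneity: a single multiplier must keep two residues small at once, and Lemma~\ref{L2.1} governs only one element at a time, so the quantities $ma\bmod n$ and $mb\bmod n$ are coupled and cannot be prescribed independently. The genuinely delicate configuration is $a$ small with $b$ close to $n/2$ (for instance $a=4$), where the naive choice $m\approx n/b$ fails and one is forced to take $m\approx n/a$ and then verify quantitatively that $mb\bmod n$ is large enough to meet the budget $r_a+r_b+2m\le n$; it is here that the hypotheses $p\ge 7$ and $n\ge 289$ are used, to guarantee that enough admissible multipliers exist and that the error terms are absorbed. Once the correct $m$ is located in each case, $(\ast)$ holds and the reduction in the first paragraph yields $\|mS\|_g'=n$, completing the proof.
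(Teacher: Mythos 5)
Your opening reduction is correct, and it is in substance the paper's Lemma~\ref{L4.1}: for a multiplier $M$ with $\frac{kn}{c}<M<\frac{kn}{b}$, $\gcd(M,n)=1$ and $Ma<n$, one gets $|Mc|_n=Mc-kn$, $|M(n-b)|_n=kn-Mb$, $|M(n-a)|_n=n-Ma$, and the hypothesis $c=a+b-2$ collapses $\|MS\|_g'$ to exactly $n$; your condition $(\ast)$, equivalently $r_a+r_b+2m\le n$, is the same criterion in dual form. Up to that point your proposal and the paper agree.

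The rest, however, is a plan rather than a proof, and the one concrete device you propose would fail. Multipliers $y=1+t\alpha$ with $\alpha=n/p$ (Lemma~\ref{L2.1}) fix an element $x$ only when $p\mid x$; they do \emph{not} fix elements whose residue merely happens to be small. So after you arrange $|m_1a|_n<n/p$, a further multiplication by $1+t\alpha$ shifts that residue by multiples of $\alpha$ and destroys it whenever $\gcd(a,p)=1$ — which is the generic situation in Proposition~\ref{P3.2}. This ``fix one element, then adjust the other'' trick is precisely what the paper uses for Proposition~\ref{P3.1}, where the repeated element $x_1=p^s$ really is $p$-divisible; it is unavailable here. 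A second, independent problem is the budget term $2m$: your target is $r_a+r_b+2m\le n$, so the multiplier itself must be small (certainly $m<n/2$), but products of multipliers of the form $1+t\alpha$ (each of size at least $n/p$) carry no control on $|m|_n$, and your sketch never addresses this. The paper resolves both issues at once by taking $M$ inside an interval $\bigl(\frac{kn}{c},\frac{kn}{b}\bigr)$ with $Ma<n$, so that smallness of $M$ is built in, and then the actual substance of the proposition — which your proposal defers to ``a search over a suitable range'' — is the case analysis of Lemmas~\ref{L4.2}--\ref{L4.6}: gap-length estimates in terms of $s=\lfloor b/a\rfloor$ and $n/c$ guaranteeing an integer coprime to $n$ in such an interval, and, in the delicate regime $a\in\{4,5\}$ with $b$ close to $n/2$ that you correctly flag, explicit equivalent-sequence constructions (multiplying by $2$, $t$, $t\pm1$, $3l$, \dots) together with prime-power exclusions such as $305=5\times61$. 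Without that work, the existence of an admissible $m$ satisfying $(\ast)$ is asserted, not proved.
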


\begin{proposition}\label{P3.3}
Let $G$ be a cyclic group of order $n$ with $n=p^\mu\geq 289, p\geq 7$ and $\mu\geq2$. Let $S=g^2\cdot(cg)\cdot((n-b)g)\cdot((n-a)g)$ be a minimal zero-sum sequence with $3=a\leq b<b+1=c<\frac{n}{2}$. Then $\ind(S)=2$ if and only if $c=\frac{n-1}{2}$.
\end{proposition}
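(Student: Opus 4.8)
\emph{Setup and the ``if'' direction.} Write $b=c-1$ and $a=3$, so that $S=g^{2}\cdot(cg)\cdot((n-b)g)\cdot((n-3)g)$ and the coefficient sum is $2+c+(n-b)+(n-3)=2n$; hence $\|S\|_g=2$ and, as noted in the introduction, $\ind(S)\in\{1,2\}$. The plan is to treat the two directions separately. When $c=\frac{n-1}{2}$ one has $b=\frac{n-3}{2}$, so $n-b=\frac{n+3}{2}$ and $S=g^{2}\cdot(\frac{n-1}{2}g)\cdot(\frac{n+3}{2}g)\cdot((n-3)g)$ is exactly the sequence of Proposition~\ref{P2.2}; thus $\ind(S)=2$. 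It therefore remains to prove the ``only if'' direction in contrapositive form: assuming $c\neq\frac{n-1}{2}$, equivalently $3\le b\le\frac{n-5}{2}$ (recall $n$ is odd and $c<\frac n2$, so $c\le\frac{n-1}{2}$ forces $c\le\frac{n-3}{2}$), I will show $\ind(S)=1$.

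\emph{Reduction to a residue condition.} The first step is to convert $\ind(S)=1$ into a clean arithmetic statement. For $m\in U(n)$ I compute $\|mS\|_g'$ using $c=b+1$: since $|{-mb}|_n=n-|mb|_n$ and $|{-3m}|_n=n-|3m|_n$, while $|mc|_n$ equals $|mb|_n+m$ or $|mb|_n+m-n$ according as $|mb|_n+m<n$ or not, a short case check shows that $\|mS\|_g'=n$ can occur \emph{only} in the latter subcase and \emph{precisely} when $3m<n$. Consequently, since $\ind(S)\in\{1,2\}$,
$$ \ind(S)=1 \iff \exists\, m\in U(n):\ m<\tfrac n3 \ \text{ and }\ |mb|_n\ge n-m. $$
Geometrically this says that the interval $\bigl(\tfrac bn,\tfrac{b+1}{n}\bigr]$, of width $\tfrac1n$, contains a fraction $\tfrac km$ with denominator $m<\tfrac n3$ and $\gcd(m,n)=1$ (taking $k=\lceil mb/n\rceil$). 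So the whole task becomes: for every integer $b$ with $3\le b\le\frac{n-5}{2}$, locate such a fraction.

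\emph{Constructing $m$.} I would produce $m$ by cases on the size of $b$, in each case exhibiting a suitable rational approximation. For small $b$ one takes a moderate $k$ and inspects $\bigl[\,kn/(b+1),\,kn/b\,\bigr)$, of length $kn/\bigl(b(b+1)\bigr)$, which lies below $n/3$ once $k<(b+1)/3$; pushing $k$ toward its largest admissible value makes this interval long (of length about $n/(3b)$), hence — because $n\ge289$ and $p\ge7$ — long enough to contain an integer coprime to $n=p^{\mu}$. For $b$ near $\frac n2$, put $w:=n-2b$ (odd, and $w\ge5$ since $b\le\frac{n-5}{2}$) and use the fractions $\frac{(m-1)/2}{m}=\frac12-\frac1{2m}$ nearest $\frac12$ from below: the requirement $\frac12-\frac1{2m}\in(\frac bn,\frac{b+1}{n}]$ becomes $m\in\bigl(\,n/w,\ n/(w-2)\,\bigr]$, whose length $\frac{2n}{w(w-2)}$ is again large enough to capture an odd $m$ coprime to $n$, while $m\le n/(w-2)\le n/3$ automatically. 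The large \emph{middle} range of $b$ is the crux: there I would take $m$ to be the denominator of a one-sided best approximation $\tfrac km$ of $\tfrac bn$, noting that once such a denominator exceeds $\sqrt n$ one has $|mb|_n=n-|mb-kn|\ge n-m$ automatically (the approximation error is then at most $m$); it thus suffices to find an admissible approximation denominator in the window $[\sqrt n,\,n/3)$ that avoids multiples of $p$, using convergents together with intermediate fractions to guarantee a denominator lands in that window.

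\emph{Main obstacle.} The delicate point is not any single case but the uniformity: one must guarantee that for \emph{every} $b\in[3,\frac{n-5}{2}]$ at least one of the above intervals simultaneously (i) has length comfortably exceeding $p$ (indeed exceeding $2p$ when an additional parity constraint is imposed), so that it meets $U(n)$, since two consecutive integers cannot both be divisible by $p$, and (ii) lies entirely below $n/3$. Both requirements tighten exactly as $b\to\frac n2$, and the borderline $w=3$, i.e. $b=\frac{n-3}{2}$, is precisely where the nearest admissible fraction is forced to have denominator at least $n/3$: the closest fraction to $\frac12$ from below with denominator less than $\frac n3$ lies at distance greater than $\frac{3}{2n}$ from $\frac12$, hence outside $\bigl(\frac bn,\frac{b+1}{n}\bigr]$. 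This is the mechanism that singles out $c=\frac{n-1}{2}$ as the unique exceptional value and dovetails with Proposition~\ref{P2.2}. The hypothesis $n\ge289$ is used throughout to render the length estimates effective, and the finitely many prime powers with $n<289$ are dispatched by the computer verification in Proposition~\ref{small n}. Assembling these estimates, with the attendant coprimality bookkeeping, yields $\ind(S)=1$ for all $b\neq\frac{n-3}{2}$ and completes the proof.
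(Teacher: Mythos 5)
Your reduction and your treatment of the ``if'' direction are correct, and in fact they coincide with the paper's machinery: the equivalence
$\ind(S)=1 \iff \exists\, m\in U(n)$ with $3m<n$ and $|mb|_n\ge n-m$, i.e.\ a fraction $\frac{k}{m}\in(\frac{b}{n},\frac{b+1}{n}]$ with $m<\frac{n}{3}$ and $\gcd(m,n)=1$, is exactly the paper's Lemma~4.1 specialized to $a=3$, $c=b+1$, and the exceptional sequence is handled by Proposition~2.2 in both treatments. You also correctly identify why $b=\frac{n-3}{2}$ is the unique bad window. However, everything after the reduction is a plan rather than a proof, and the plan breaks precisely where the paper has to work hardest (its Lemmas~4.7--4.9).

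The concrete gaps are these. First, your mechanism for the middle range is false as stated: for a best one-sided approximation $\frac{k}{m}$ of $\frac{b}{n}$ from above, the scaled error $kn-mb$ is \emph{not} controlled by $m$ once $m\ge\sqrt n$; it is bounded by $\frac{n}{q}$, where $q$ is the denominator of the companion approximation from below (Farey neighbours $\frac{l}{q}<\frac{k}{m}$ have gap $\frac{1}{qm}$), so one needs $qm\ge n$, which fails exactly when $\frac{b}{n}$ sits near a fraction of tiny denominator. For instance, if $b=\frac{n+4}{3}$, then $\frac13$ and any $\frac{k}{m}$ with $3k-m=1$ are Farey neighbours, so such $\frac{k}{m}$ is a best approximation from above, yet $kn-mb=\frac{n-4m}{3}$, which exceeds $m$ for all $m<\frac{n}{7}$; thus denominators in $[\sqrt n,\frac n7)$ satisfy your hypothesis and violate your conclusion. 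Second, the requirement $p\nmid m$ is dismissed as ``bookkeeping,'' but since $p$ can be as large as $\sqrt n$ (e.g.\ $n=p^2$), counting integers in short intervals cannot secure it; the paper's substitute is a structural device --- producing two admissible multipliers whose difference is nonzero and at most $p-1$ (condition (2) of Lemma~4.2, and the pairs with $1\le|(2y+x)-(2w+z)|\le p-1$ throughout Lemma~4.9) --- and your proposal has no analogue of it. Third, your three regimes do not cover $[3,\frac{n-5}{2}]$: the small-$b$ counting needs interval length about $\frac{n}{3b}\ge 2$, hence roughly $b\le\frac n6$, and the near-half construction needs $w=n-2b$ of order at most $\sqrt n$; the band around $b\approx\frac n3$, which costs the paper all of Lemma~4.8 (explicit multipliers case by case in $j=n-3c\le 16$) and much of Lemma~4.9, is left entirely to the flawed claim above. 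So the proposal is a sound reformulation plus an accurate description of the difficulty, but the difficulty itself --- the existence of the unit $m$ for every admissible $b$ --- is not resolved.
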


We are now ready to present a proof for our main result.

\noindent{\bf Proof of Theorem~\ref{maintheorem}.}

In terms of Proposition~\ref{P2.3}, we may assume that  $\mathsf h(S)=2$ and  $x_1=x_2=\gcd(n,x_1)$. If $x_1>1$, then the result follows from Proposition~\ref{P3.1}. Next we assume
$x_1=x_2=\gcd(n,x_1)=1$ and  $S=g^2\cdot(x_3g)\cdot(x_4g)\cdot(x_5g)$. As discussed in \cite{PL:13}, we may assume that $S=g^2\cdot(cg)\cdot((n-b)g)\cdot((n-a)g)$ with $3 \leq a\leq b< c<\frac{n}{2}$ and $c=a+b-2$, for otherwise, it is easy to show that $\ind(S)=1$. If $n< 289$, the result follows from Proposition~\ref{small n}. So we may assume that $n\geq 289$. Now the result follows immediately from Propositions~\ref{P3.2} and \ref{P3.3}, and we are done.

\section{Proofs For Main Propositions}

We now give the proofs for the three crucial propositions stated in previous section. The proofs will be presented in the following three subsections.

\subsection{Proof of Proposition~\ref{P3.1}}.

\bigskip
Let $x_1=p^s>1$ and $\alpha'=\frac{n}{p^s}=\frac{n}{x_1}$. We first assume that $\gcd(n,x_i)>1$ for some $i=3,4,5$, say $\gcd(n,x_3)>1$. We divide the proof into 3 cases.

{\bf Case 1}. If $x_1>\gcd(n,x_3)>1$, let $x'_3=\frac{x_3}{\gcd(n,x_3)}$, $n'=\frac{n}{\gcd(n,x_3)}$ and $\alpha'=\frac{n'}{p}$. Then by Lemma~\ref{L2.1}, we may find a $y=1+t\alpha'\in U(n')$ such that $|yx'_3|_{n'}<\alpha'$. Thus $|yx_3|_n=\gcd(n,x_3)|yx'_3|_{n'}< \gcd(n,x_3)\alpha'=\frac{n}{p}$. Since $x_1>\gcd(n,x_3)$, $y$ fixes $x_1=x_2$, so by multiplying $S$ with $y$, we may assume that $x_1, x_2, x_3\leq \frac{n}{p}$. Since  $\gcd(n,x_1,x_2,x_3,x_4,x_5)=1$, we have $\gcd(n,x_4)=1$. Thus by Lemma~\ref{L2.1}, there exists $y'=1+t\frac{n}{p}\in U(n)$ such that $|y'x_4|_n<\frac{n}{p}$. Since $y'$ fixes $x_1, x_2$ and $x_3$,  again we may assume that  $x_i\leq \frac{n}{p}$ for $i=1,2,3,4$. Thus $ \|S\|_g'=\sum_{i=1}^5x_i\leq \frac{4n}{p}+x_5<2n$, so $\ind(S)=1$ and we are done.

 {\bf Case 2}. If $x_1<\gcd(n,x_3)$, then there exists $u\in U(n)$ such that $|ux_3|_n=\gcd(n,x_3)$. So we may assume that $x_3=\gcd(n,x_3)$. Note that $x_1$ may not be $\gcd(n,x_1)$ any longer. However, as in Case 1 we may find $y=1+t\alpha'$, where $\alpha'=\frac{n}{\gcd(n,x_3)}$,  such that $|yx_1|_n< \alpha'\leq \frac{n}{p}$. Since $y$ fixes $x_3$, we may assume that $x_1, x_2, x_3\leq \frac{n}{p}$.
Since $\gcd(n,x_4)=1$,  there exists $y'=1+t\frac{n}{p}\in U(n)$ such that $|y'x_4|_n<\frac{n}{p}$. Since $y'$ fixes $x_1, x_2$ and $x_3$, so we may assume that  $x_i\leq \frac{n}{p}$ for $i=1,2,3,4$. Thus $\sum_{i=1}^5x_i\leq \frac{4n}{p}+x_5<2n$, so $\ind(S)=1$.

{\bf Case 3}. If $x_1=\gcd(n,x_3)$, let $x_3=n-kx_1=n-w$. Since $S$ is a minimal zero-sum sequence, we have $k\geq3$. (Otherwise, $x_1+x_3$ or $x_1+x_2+x_3$ has zero-sum.) If $w<\frac{n}{2}$, then $2w<n$ and $|2x_3|_n=n-2w$ and $|2x_1|_n=2x_1$. We may replace $S$ by the following equivalent sequence:
\beqs S'&=&(2x_1g)\cdot(2x_2g)\cdot((n-w')g)\cdot(|2x_4|_ng)\cdot(|2x_5|_ng).\eeqs
By repeating this process, we may assume that $w>\frac{n}{2}$. As before, since $\gcd(n,x_4)=1$, we may assume that $x_4<\frac{n}{p}$. Since $k\geq 3, p>6$ and $kp-2p-2k=(k-2)(p-2)-4>0$, $\frac{k-2}{k}w-x_4>\frac{k-2}{k}\frac{n}{2}-\frac{n}{p}=\frac{n}{2kp}(kp-2p-2k)>0$, so we have
\beqs  \|S\|_g'=\sum_{i=1}^5x_i&=&n-(k-2)x_1+x_4+x_5\\
&=&n+x_5-(\frac{k-2}{k}w-x_4)<2n.\eeqs
Therefore,  $\ind(S)=1$ and we are done.

 Next we assume that $\gcd(n,x_i)=1$ for all $i=3,4,5$. Let $x_1=p^s>1$ and $\alpha'=\frac{n}{p^s}$. As before, we may use a unit $1+t\alpha'(0\leq t\leq x_1-1)$ to move $x_3$ so that $x_3<\alpha'$. Thus we may assume that $x_3<\alpha'$. Let $\beta=\frac{n}{p}\geq\max(x_1,\alpha')$.  Then $y_1=1+t_1x_3^{-1}\beta\in U(n)$ and $y_2=1+t_2x_3^{-1}\beta\in U(n)$ for $t_1 \not=t_2\in [0,p-1]$ are distinct units. Note that every such $y$ fixes $x_1$. 
 Note also that $|(1+tx_3^{-1}\beta)x_i|_n= |x'_i+ (n-t'\beta)|_n \leq x'_i +(n-t'\beta) $ where $ 1\leq t'\leq p$ and $x'_i= x_i \,\mod\,\beta $. If $|(1+t_1x_3^{-1}\beta)x_i|_n \not=|(1+t_2x_3^{-1}\beta)x_i|_n $, then $|x'_i+ (n-t_1'\beta)|_n \not= |x'_i+ (n-t_2'\beta)|_n $, implying $t_1'\not =t'_2$. Thus $\sum_{t=0}^6 |(1+tx_3^{-1}\beta)x_i|_n =\sum_{t=0}^6|x'_i+ (n-t'\beta)|_n \leq 7x'_i +\sum_{t'=1}^7(n-t'\beta)= 7x'_i +7n -28\beta $.  We now compute the following sum
\beqs \sum_{i=1}^5\sum_{t=0}^6|(1+tx_3^{-1}\beta)x_i|_n&=& 7x_1+7x_2+\sum_{t=0}^6(x_3+t\beta)
+\sum_{i=4,5}\sum_{t=0}^6|(1+tx_3^{-1}\beta)x_i|_n\\
&\leq&14x_1+7x_3+21\beta+7x_4'+7x_5'+2\sum_{t=0}^6(n-(1+t)\beta)\\
&=&7(2x_1+x_3+x_4'+x_5')+14n-35\beta.\eeqs
Since $2x_1+x_3+x_4'+x_5'<5\beta$, we have
\beqs \sum_{i=1}^5\sum_{t=0}^6|(1+tx_3^{-1}\beta)x_i|_n<14n+35\beta-35\beta=14n.\eeqs
Then there must exist $t\in[0,6]$ such that
\beqs \sum_{i=1}^5|(1+tx_3^{-1}\beta)x_i|_n<2n.\eeqs
So \beqs \sum_{i=1}^5|(1+tx_3^{-1}\beta)x_i|_n=n,\eeqs
 and thus $\ind(S)=1$. This completes the proof.

\subsection{Proof of Proposition~\ref{P3.2}}.

\bigskip
In this subsection we will prove Proposition~\ref{P3.2} through the following lemmas.   Let $s=\lfloor\frac{b}{a}\rfloor$. Recall that from now on we always assume that $n\geq 289$.

\begin{lemma}\label{L4.1}
Let $S=g^2\cdot(cg)\cdot((n-b)g)\cdot((n-a)g)$ be the same sequence described in Proposition~\ref{P3.2}. If there exist integers $M$ and $k$ such that $\gcd(n,M)=1$, $\frac{kn}{c}<M<\frac{kn}{b}$ and $Ma<n$, then $\ind(S)=1.$
\end{lemma}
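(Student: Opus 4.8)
The starting point is the standard reformulation of the index as a minimum over units: since multiplication by any $M\in U(n)$ is a group automorphism of $G$ that fixes the index, one has $\ind(S)=\min_{M\in U(n)}\frac{1}{n}\sum_{i=1}^{5}|Mx_i|_n$, and because $S$ is a zero-sum sequence, each inner sum $\sum_{i}|Mx_i|_n\equiv M\sum_i x_i\equiv 0\pmod n$ is a positive multiple of $n$. The coefficients of $S$ are $1,1,c,n-b,n-a$, and $\ind(S)\ge 1$ always, so to conclude $\ind(S)=1$ it suffices to exhibit a single $M\in U(n)$ with $\sum_{i}|Mx_i|_n=n$. The plan is to show that the $M$ supplied by the hypothesis does exactly this.

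First I would reduce the residues using the assumption $Ma<n$. This forces $M<n/a<n$, and since $a\ge 4$ also $2M<n$, so that $|M|_n=M$, $|2M|_n=2M$ and $|Ma|_n=Ma$; consequently $|M(n-a)|_n=n-Ma$ and $|M(n-b)|_n=n-|Mb|_n$. Writing $q_b=\lfloor Mb/n\rfloor$ and $q_c=\lfloor Mc/n\rfloor$, so that $|Mb|_n=Mb-q_bn$ and $|Mc|_n=Mc-q_cn$, I would substitute these into
\[
\sum_{i=1}^{5}|Mx_i|_n=2M+|Mc|_n+(n-|Mb|_n)+(n-Ma).
\]
Collecting the terms linear in $M$ gives a coefficient $M(2+c-b-a)$, which vanishes by the defining relation $c=a+b-2$, leaving the clean identity
\[
\sum_{i=1}^{5}|Mx_i|_n=\bigl(2-(q_c-q_b)\bigr)n.
\]

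It then remains only to pin down $q_c-q_b$. The left-hand side above is a sum of nonnegative terms bounded below by $2M>0$, so positivity forces $q_c-q_b\le 1$. On the other hand, the hypothesis $\frac{kn}{c}<M<\frac{kn}{b}$ is precisely $Mb<kn<Mc$; the inequality $Mb<kn$ yields $q_b=\lfloor Mb/n\rfloor\le k-1$, while $Mc>kn$ yields $q_c=\lfloor Mc/n\rfloor\ge k$, so $q_c-q_b\ge 1$. Combining the two bounds forces $q_c-q_b=1$, whence $\sum_i|Mx_i|_n=n$ and therefore $\ind(S)=1$.

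The argument is essentially controlled bookkeeping, so I do not expect a serious conceptual obstacle. The only points demanding care are verifying that $Ma<n$ together with $a\ge 4$ legitimately removes all the residue reductions on $M$, $2M$, and $Ma$ (so that the identity above is exact), and correctly converting the two strict inequalities $Mb<kn<Mc$ into the floor estimates $q_b\le k-1$ and $q_c\ge k$. Once the identity $\sum_i|Mx_i|_n=(2-(q_c-q_b))n$ is established, the conclusion is immediate from the pair of opposite bounds that pin $q_c-q_b$ to $1$.
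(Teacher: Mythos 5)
Your proof is correct and is essentially the paper's own argument: the paper bounds $\|MS\|_g' \le M+M+(Mc-kn)+(kn-Mb)+(n-Ma)=n$ using exactly the residue estimates you encode as $q_c\ge k$ and $q_b\le k-1$, with the same cancellation via $c=a+b-2$, and then concludes from the fact that the sum of a zero-sum sequence's residues is a positive multiple of $n$. Your exact identity $\sum_i|Mx_i|_n=\bigl(2-(q_c-q_b)\bigr)n$ is just a sharper, equality-form phrasing of the paper's one-sided bounds.
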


\begin{proof}
Note that
\beqs  \|MS\|_g'&=&M+M+|Mc|_n+|M(n-b)|_n+|M(n-a)|_n\\
&\leq &M+M+Mc-kn+(kn-Mb)+(n-Ma)\\
&=&n+M(1+1+c-b-a)=n.\eeqs
We conclude that $\ind(S)=1$.
\end{proof}

\begin{lemma}\label{L4.2}
If $S=g^2\cdot(cg)\cdot((n-b)g)\cdot((n-a)g)$
is a minimal zero-sum sequence such that  $3\leq a\leq b<c<\frac{n}{2}$, then $\ind(S)=1$ provided that one of the following two conditions holds:\\
(1) $\frac{s(a-2)n}{bc}> 2.$\\
(2) $\frac{(s-1)(a-2)n}{bc}> 1$ and $\frac{n}{c}< p-2.$
\end{lemma}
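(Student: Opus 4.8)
The plan is to reduce both cases to Lemma~\ref{L4.1} by producing a suitable pair $(M,k)$. Throughout I would work with the interval $I_k=\left(\frac{kn}{c},\frac{kn}{b}\right)$, whose length equals $\frac{kn}{b}-\frac{kn}{c}=\frac{k(c-b)n}{bc}=\frac{k(a-2)n}{bc}$ since $c=a+b-2$. Verifying the hypotheses of Lemma~\ref{L4.1} then amounts to locating an integer $M\in I_k$ with $\gcd(n,M)=1$ and $Ma<n$. I would first dispose of the size constraint: because $s=\lfloor b/a\rfloor$ gives $b\ge sa\ge ka$ for every $k\le s$, the upper endpoint satisfies $\frac{kn}{b}\le\frac{n}{a}$, so any integer lying in $I_k$ with $k\le s$ automatically obeys $Ma<n$. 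Thus only the existence of an integer coprime to $p$ inside $I_k$ remains to be arranged.

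The arithmetic input I would isolate as a counting fact: an open interval of length $>2$ contains two consecutive integers, of which at least one is coprime to $p$ (hence to $n=p^{\mu}$); while an open interval of length $>1$ always contains an integer, and can fail to contain one coprime to $p$ only if it contains a single integer which is a multiple of $p$, in which case its length is moreover $\le 2$. Using this, condition (1) is immediate with $k=s$: the hypothesis $\frac{s(a-2)n}{bc}>2$ is exactly the statement that $I_s$ has length $>2$, so $I_s$ contains two consecutive integers and hence an $M$ coprime to $n$, and Lemma~\ref{L4.1} gives $\ind(S)=1$.

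For condition (2) I would use the two intervals $I_{s-1}$ and $I_s$ simultaneously; note that $\frac{(s-1)(a-2)n}{bc}>1$ forces $s\ge 2$, and both intervals then have length $>1$. If either contains an integer coprime to $p$ we are done as before. Otherwise both are ``bad'', so by the counting fact each contains a unique integer that is a multiple of $p$, say $pj_1\in I_{s-1}$ and $pj_2\in I_s$, and each interval has length $\le 2$. A short computation gives $\frac{sn}{c}-\frac{(s-1)n}{b}=\frac{n\big(b-(s-1)(a-2)\big)}{bc}>0$ (since $b\ge sa$), so $I_{s-1}$ lies entirely below $I_s$ and therefore $pj_2\ge pj_1+p$. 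On the other hand the two left endpoints differ by exactly $\frac{sn}{c}-\frac{(s-1)n}{c}=\frac{n}{c}$, and since each interval has length at most $2$ we obtain $pj_2-pj_1<\frac{n}{c}+2<(p-2)+2=p$, invoking the hypothesis $\frac{n}{c}<p-2$. This contradicts $pj_2\ge pj_1+p$, so the two intervals cannot both be bad, and Lemma~\ref{L4.1} applies with $k=s-1$ or $k=s$.

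The step I expect to be the crux is precisely the coprimality obstruction in condition (2): an interval of length only slightly above $1$ need not contain any integer prime to $p$, and the role of the extra hypothesis $\frac{n}{c}<p-2$ is that the equally spaced left endpoints of the $I_k$ (spacing $\frac{n}{c}$) are too close together, relative to $p$, for two consecutive intervals to both be blocked by the sparse multiples of $p$. Making the ``bad interval has length $\le 2$'' bound interact correctly with this endpoint spacing is where care is needed; the remaining estimates are routine.
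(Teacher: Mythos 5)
Your proof is correct and takes essentially the same route as the paper's: condition (1) via two consecutive integers in the length-$>2$ interval $\left(\frac{sn}{c},\frac{sn}{b}\right)$, and condition (2) via the pair of intervals for $k=s-1$ and $k=s$, using the bound $2+\frac{n}{c}<p$ to rule out both candidate integers being multiples of $p$. Your contradiction framing (both intervals ``bad'') is just the contrapositive of the paper's argument, and your explicit verification of the interval disjointness and of the constraint $Ma<n$ makes the write-up, if anything, slightly more careful than the original.
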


\begin{proof}
(1). If  $\frac{s(a-2)n}{bc}> 2$, then there exist two integers $M_1, M_2 \in [\frac{sn}{c}, \frac{sn}{b}]$, so one of them, say $M$, is co-prime to $n$. Since $Ma<n$, by Lemma~\ref{L4.1}
 we have  $\ind(S)=1$.

(2). If $\frac{(s-1)(a-2)n}{bc}> 1$, then we can find two integers $M_1$ and $M_2$ such that $M_1 \in [\frac{(s-1)n}{c}, \frac{(s-1)n}{b}]$ and $M_2 \in [\frac{sn}{c}, \frac{sn}{b}]$. We may assume that $\frac{(s-1)n}{b}-M_1 \leq 1 $ and $M_2- \frac{sn}{c} \leq 1$; for otherwise, by the proof in (1) we  infer $\ind(S)=1$. Note that

$$M_2-M_1 =(M_2- \frac{sn}{c})+(\frac{sn}{c}-\frac{(s-1)n}{b}) +(\frac{(s-1)n}{b}-M_1 ) \leq 2+\frac{n}{c}< p.$$

Thus one of $M_1$ and $M_2$ is  co-prime to $n$, so  by Lemma~\ref{L4.1}
 we have  $\ind(S)=1$.
\end{proof}

\begin{lemma}\label{L4.3}
If the sequence $$S=g^2\cdot(cg)\cdot((n-b)g)\cdot((n-a)g),$$
is a minimal zero-sum sequence such that  $6\leq a\leq b<c<\frac{n}{2}$ and $s\geq 4$, then $\ind(S)=1$.
\end{lemma}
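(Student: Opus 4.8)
The plan is to deduce Lemma~\ref{L4.3} directly from Lemma~\ref{L4.2} by verifying that, under the hypotheses $6\le a\le b<c<\frac n2$, $s=\lfloor b/a\rfloor\ge 4$ and $n\ge 289$, at least one of the two sufficient conditions in Lemma~\ref{L4.2} always holds. The elementary inputs I will use throughout are $\frac{s}{s+1}\ge\frac45$ (from $s\ge4$), $\frac{a-2}{a}\ge\frac23$ (from $a\ge6$), $b<(s+1)a$, and $c=a+b-2$. Combining the first three with $\frac1b>\frac1{(s+1)a}$ gives $\frac{s(a-2)n}{bc}>\frac{s}{s+1}\cdot\frac{a-2}{a}\cdot\frac nc\ge\frac{8}{15}\cdot\frac nc$ and, similarly, $\frac{(s-1)(a-2)n}{bc}>\frac{s-1}{s+1}\cdot\frac{a-2}{a}\cdot\frac nc\ge\frac25\cdot\frac nc$. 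Since both quantities are controlled by $\frac nc$, I would split the argument according to the size of $c$.

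First, if $c<\frac{4n}{15}$, then $\frac nc>\frac{15}{4}$, so $\frac{s(a-2)n}{bc}>\frac{8}{15}\cdot\frac{15}{4}=2$ and condition~(1) of Lemma~\ref{L4.2} applies, giving $\ind(S)=1$. Next, if $\frac{4n}{15}\le c<\frac{2n}{5}$, then on one hand $\frac nc\le\frac{15}{4}<5\le p-2$, and on the other hand $\frac nc>\frac52$ forces $\frac{(s-1)(a-2)n}{bc}>\frac25\cdot\frac52=1$; hence condition~(2) of Lemma~\ref{L4.2} applies and again $\ind(S)=1$.

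The remaining range $c\ge\frac{2n}{5}$ is the genuinely delicate one, since there $\frac nc\le\frac52$ and the loose estimates above no longer beat the required thresholds. Here $\frac nc\le\frac52<p-2$ gives the second requirement of condition~(2) for free, so the whole task is to prove $\frac{(s-1)(a-2)n}{bc}>1$. Using $c<\frac n2$ and $b<(s+1)a$ I would bound $\frac{(s-1)(a-2)n}{bc}>\frac{(s-1)(a-2)n}{(s+1)a\cdot(n/2)}=\frac{2(s-1)}{s+1}\cdot\frac{a-2}{a}$, so it suffices to establish $\frac{a-2}{a}>\frac{s+1}{2(s-1)}$, i.e.\ $a>\frac{4(s-1)}{s-3}$. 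The crucial point is that a large $c$ forces a large $a$: from $c\ge\frac{2n}{5}$ and $c=a+b-2<(s+2)a$ one obtains $a>\frac{2n}{5(s+2)}\ge\frac{578}{5(s+2)}$ by $n\ge289$. For the four values $4\le s\le 7$ this lower bound on $a$ already exceeds $\frac{4(s-1)}{s-3}$ (checked individually, e.g.\ for $s=4$ it reads $a\ge 20>12$), while for $s\ge 8$ one has $\frac{4(s-1)}{s-3}<6\le a$ with no appeal to $n\ge289$ at all, since the threshold drops below $6$ exactly at $s=8$. In every case $\frac{a-2}{a}>\frac{s+1}{2(s-1)}$, condition~(2) applies, and $\ind(S)=1$.

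The main obstacle is precisely this last range $c\ge\frac{2n}{5}$: the constants in Lemma~\ref{L4.2} are too weak to treat $c$ near $\frac n2$ on their own, and the only way through is the arithmetic interplay of $c=a+b-2$, the ceiling $b<(s+1)a$, and the standing hypothesis $n\ge289$, which jointly prevent $a$ from being small when $c$ is large. In writing up I would be careful to separate the explicit checks for $4\le s\le 7$ from the uniform bound for $s\ge 8$, and to keep all inequalities strict where the conclusion of Lemma~\ref{L4.2} requires it.
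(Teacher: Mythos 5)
Your proof is correct, and it shares the paper's basic strategy --- reduce everything to Lemma~\ref{L4.2} via $b<(s+1)a$, $\frac{s}{s+1}\ge\frac45$, $\frac{a-2}{a}\ge\frac23$, and a case split on the size of $\frac nc$ --- but the decomposition is genuinely different in two respects. First, the paper cuts at integer values of $\frac nc$ (cases $\frac nc\ge 4$, $3\le\frac nc<4$, $2<\frac nc<3$), and in the last case it must dispose of the sub-case $\frac nc<3<\frac nb$ by a direct appeal to Lemma~\ref{L4.1} with $M=3$ before it can assume $2<\frac nc<\frac nb<3$; you cut instead at the constants $\frac{15}{4}$ and $\frac52$ dictated by your uniform estimates $\frac{8}{15}\cdot\frac nc$ and $\frac25\cdot\frac nc$, which removes any need for Lemma~\ref{L4.1}: every case of yours goes through Lemma~\ref{L4.2} alone. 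Second, in the delicate range the two proofs extract the needed lower bound on $a$ from different inequalities: the paper (for $4\le s\le 6$) uses $n<3b<3(s+1)a\le 21a$ to get $a\ge 14$, splitting off $s\ge 7$ separately, whereas you use $c\ge\frac{2n}{5}$ together with $c=a+b-2<(s+2)a$ to get $a>\frac{2n}{5(s+2)}$, and your reformulation $a>\frac{4(s-1)}{s-3}$ then treats all $s\ge 8$ uniformly with only $a\ge 6$, reserving $n\ge 289$ for $4\le s\le 7$. I checked your numerical verifications ($a\ge 20>12$ for $s=4$, $a\ge 17>8$, $a\ge 15>\frac{20}{3}$, $a\ge 13>6$, and $4+\frac{8}{s-3}\le\frac{28}{5}<6$ for $s\ge 8$) and they are all sound; note that even the non-strict bound $a\ge\frac{4(s-1)}{s-3}$ would suffice, since your inequality $\frac{(s-1)(a-2)n}{bc}>\frac{2(s-1)}{s+1}\cdot\frac{a-2}{a}$ is already strict. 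What your route buys is uniformity and one fewer lemma; what the paper's buys is that its ``integer between $\frac nc$ and $\frac nb$ plus Lemma~\ref{L4.1}'' device is exactly the template reused in the harder Lemmas~\ref{L4.4}--\ref{L4.6}, so its Case 3 doubles as a warm-up for those arguments.
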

\begin{proof}
We divide the proof into the following 3 cases.

{\bf Case 1.} $\frac{n}{c}\geq 4$. Since $\frac{s(a-2)n}{bc}> \frac{s(a-2)n}{(s+1)ac}\geq (\frac{4}{5})(\frac{4}{6})4 >2$, by Lemma~\ref{L4.2} we infer $\ind(S)=1$.

{\bf Case 2.} $3 \leq \frac{n}{c}< 4$. Then $\frac{(s-1)(a-2)n}{bc}> \frac{(s-1)(a-2)n}{(s+1)ac}\geq (\frac{3}{5})(\frac{4}{6})3 >1 $. Since $\frac{n}{c}<4< p-2$,  it follows from Lemma~\ref{L4.2} that $\ind(S)=1$.

{\bf Case 3.} $2 < \frac{n}{c}< 3$. If $\frac{n}{c}< 3 <\frac{n}{b}$, then $3a <n$ and thus by Lemma~\ref{L4.1} $\ind(S)=1$. So we may assume that $2 < \frac{n}{c}<\frac{n}{b} < 3$.
If $s\geq 7$, then  $\frac{(s-1)(a-2)n}{bc}> \frac{(s-1)(a-2)n}{(s+1)ac} \geq (\frac{6}{8})(\frac{4}{6})2 = 1$. By Lemma~\ref{L4.2}  $\ind(S)=1$. If $4\leq s \leq 6$, $n <3b< 3(s+1)a\leq 21 a$, so $a\geq 14$. Again, $\frac{(s-1)(a-2)n}{bc}> \frac{2(s-1)(a-2)}{(s+1)a} \geq (\frac{3}{5})(\frac{12}{14})2 > 1$. By Lemma~\ref{L4.2} we infer $\ind(S)=1$.

\end{proof}

\begin{lemma}\label{L4.4}
If $S=(g)\cdot(g)\cdot(cg)\cdot((n-b)g)\cdot((n-a)g)$
is a minimal zero-sum sequence such that  $6\leq a\leq b<c<\frac{n}{2}$ and $s\leq 3$, then $\ind(S)=1$.
\end{lemma}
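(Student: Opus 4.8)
The plan is to reduce the remaining cases $s \in \{1,2,3\}$ to an application of Lemma~\ref{L4.1}, exactly as the previous lemmas did, by locating a suitable multiplier $M$ coprime to $n$ inside a controlled interval. The governing parameter is $s=\lfloor b/a\rfloor$, so I would split into the subcases $s=3$, $s=2$, and $s=1$, since the arithmetic constraints $sa \le b < (s+1)a$ sharpen the available bounds on $\frac{n}{c}$ and $\frac{n}{b}$ differently in each case. In each subcase I would try first to verify condition (1) of Lemma~\ref{L4.2}, namely $\frac{s(a-2)n}{bc} > 2$, and fall back on condition (2), $\frac{(s-1)(a-2)n}{bc} > 1$ together with $\frac{n}{c} < p-2$, when $s$ is too small for (1) to hold outright. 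Throughout I would exploit $c = a+b-2 < b+c$ and the standing hypothesis $a \ge 6$ to bound $a-2 \ge \tfrac{2}{3}a$ and $b < (s+1)a$, so that $\frac{(a-2)n}{bc}$ is controlled from below by a product of explicit fractions times $\frac{n}{c}$.

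First I would handle $s=3$. Here $3a \le b < 4a$, so $c = a+b-2 < 5a$ and $\frac{(a-2)n}{bc} > \frac{(2/3)a \cdot n}{4a \cdot c} = \frac{n}{6c}$; when $\frac{n}{c}$ is large (say $\ge 4$) condition (1) with factor $s=3$ gives $\frac{3(a-2)n}{bc} > \frac{n}{2c}\cdot\frac{4}{3} > 2$, finishing via Lemma~\ref{L4.2}(1). For the smaller ranges $2 < \frac{n}{c} < 4$ I would instead chase condition (2), using $\frac{n}{c} < 4 < p-2$ (which holds since $p \ge 7$) and, where needed, the minimal-zero-sum constraint to force $a$ large: if $n < 3b$ then $n < 3(s+1)a$ bounds $a$ from below, improving the fraction $\frac{a-2}{a}$ enough to clear the threshold $1$. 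The cases $s=2$ and $s=1$ proceed by the same template, but are more delicate because the leading factor $s$ (and $s-1$) shrinks; when $s=1$ condition (2) degenerates ($s-1=0$), so there I expect to argue directly, either by finding $M$ with $\frac{n}{c} < M < \frac{n}{b}$ and $Ma < n$ for Lemma~\ref{L4.1}, or by combining the interval $[\frac{n}{c},\frac{n}{b}]$ with the possibility that $2 < \frac{n}{b}$ so that $M=2$ or $M=3$ itself works.

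The main obstacle will be the boundary regimes where $\frac{n}{c}$ is close to $2$ and $s$ is small (especially $s=1,2$): the interval $[\frac{sn}{c},\frac{sn}{b}]$ may be too short to be guaranteed to contain an integer coprime to $n$, and condition (1) of Lemma~\ref{L4.2} fails. I expect to resolve this precisely the way Lemma~\ref{L4.3}, Case~3 did — by invoking the minimality of $S$ to derive a lower bound on $a$ (from $n < (s+1)a\cdot(\text{small integer})$), which tightens $\frac{a-2}{a}$ toward $1$ and pushes the relevant product past the threshold. The delicate bookkeeping is ensuring that across all of $s\in\{1,2,3\}$ and all integer values of $\lfloor n/c\rfloor$ in $\{2,3,\dots\}$, either Lemma~\ref{L4.1} or one of the two conditions of Lemma~\ref{L4.2} applies; the standing hypotheses $n \ge 289$, $p \ge 7$, and $a \ge 6$ are exactly what should make every such window nonempty and every fractional threshold attainable.
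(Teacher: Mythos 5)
Your plan works in the easy regimes (it reproduces the paper's Case 1, where $\frac{n}{c}\geq 5$, and the parts of the small-$\frac{n}{c}$ analysis where Lemma~\ref{L4.2} still has room to operate), but it has a genuine gap exactly at the point you flag as "the main obstacle," and the fallback you propose there cannot work. In the surviving regime $2<\frac{n}{c}<\frac{n}{b}<3$ with $s\in\{1,2\}$, both conditions of Lemma~\ref{L4.2} are not merely delicate --- they are \emph{provably false}, independently of any lower bound on $a$. Indeed, condition (1) for $s=1$ reads $\frac{(a-2)n}{bc}>2$, condition (1) for $s=2$ and condition (2) for $s=2$ both reduce to $\frac{(a-2)n}{bc}>1$; but $\frac{(a-2)n}{bc}=\frac{n}{b}-\frac{n}{c}$ is the length of an interval whose two endpoints both lie in $(2,3)$, so it is less than $1$. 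Hence no amount of "tightening $\frac{a-2}{a}$ toward $1$" via minimality can push these products past their thresholds, and likewise there is no integer $M$ with $\frac{n}{c}<M<\frac{n}{b}$ at all (nor is $M=2$ or $3$ in the interval), so the $k=1$ form of Lemma~\ref{L4.1} you invoke is also unavailable. Your template therefore leaves the heart of the lemma unproved.

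The missing idea is to apply Lemma~\ref{L4.1} with a multiplier $k$ \emph{strictly larger} than $s$, which is outside what Lemma~\ref{L4.2} encodes. The paper dilates the interval: e.g.\ for $s=1$, $m=2$ it shows (after disposing of $b\le 2a-4$ by contradiction) that $\frac{2n}{b}-\frac{2n}{c}>1$ with both endpoints in $(4,6)$, forcing $\frac{2n}{c}<5<\frac{2n}{b}$; then $\gcd(5,n)=1$ automatically since $n=p^\mu$, $p\geq 7$, and the regime constraints give $a\geq 34$, hence $5a<n$, so Lemma~\ref{L4.1} applies with $k=2$, $M=5$. The same device handles $s=2$ ($k=2$, $M=5$, or $k=3$, $M=10$ when $m=3$) and the residual part of $s=3$, where the paper chains through $k=2,3,4$ to land on $M\in\{5,8,9\}$ --- a maneuver needed precisely to dodge $M=7$, which may fail to be coprime to $n$ when $p=7$; this coprimality hazard for small concrete $M$ is another point your proposal never addresses. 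In short: your reduction to Lemma~\ref{L4.2} plus a lower bound on $a$ collapses exactly on the boundary cases that make this lemma nontrivial, and the repair requires the dilation-by-$k>s$ use of Lemma~\ref{L4.1} with explicit target integers.
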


\begin{proof}
We  divide the proof into two cases.

{\bf Case 1.} $\frac{n}{c}\geq 5$. If $s\geq 2$, then $\frac{s(a-2)n}{bc}> \frac{s(a-2)n}{(s+1)ac}> (\frac{2}{3})(\frac{4}{6})5 >2$, so  by Lemma~\ref{L4.2},  $\ind(S)=1$.

Next assume that $s=1$.  If $\frac{n}{c}\geq 6$, then $\frac{(a-2)n}{bc}> \frac{(a-2)n}{2ac}\geq 2$, so again we infer $\ind(S)=1$. If  $5\leq \frac{n}{c}< 6$, then $3a-3\geq c > \frac{n}{6}$, so $a \geq 18$. Thus $\frac{(a-2)n}{bc}> \frac{(a-2)n}{2ac}\geq (\frac{16}{36})5 >  2$, so $\ind(S)=1$.

{\bf Case 2.} $\frac{n}{c}< 5$. We may assume that $m<\frac{n}{c}<\frac{n}{b}<m+1\, (*)$ with $m\in[2,4]$ (for otherwise, it is easy to show that $\ind(S)=1$).

{\bf Subcase 2.1.} $s=1$. 
If $m\geq 3$, we have
\beqs \frac{n}{b}-\frac{n}{c}=\frac{(a-2)n}{bc}> \frac{3(a-2)}{2a}\geq 1,\eeqs
giving a contradiction to $(*)$.

Next assume  $m=2$. If $b\leq 2a-4$, then $\frac{n}{b}-\frac{n}{c}=\frac{(a-2)n}{bc}\geq\frac{n}{2c}>1$, giving a contradiction. If $b\geq 2a-3$, then $n\geq 2c+1=6a-9$. Since
 $\frac{2n}{b}-\frac{2n}{c}= \frac{2(a-2)n}{bc}\geq \frac{4(a-2)}{2a-1} > 1$, we have $\frac{2n}{c}<5<\frac{2n}{b}$. Note that $3a-3\geq c > \frac{n}{3}$, so $a \geq 34$. Thus $5a<6a-9<n$, so by Lemma~\ref{L4.1} $\ind(S)=1.$




{\bf Subcase 2.2.} $s=2$.
If $m=2$, then $2<\frac{n}{c}<\frac{n}{b}<3$ and thus $a>\frac{n}{9}>32$. Since $\frac{2n}{b}-\frac{2n}{c}= \frac{2(a-2)n}{bc}\geq \frac{4(a-2)}{3a-1} > 1$, we have $\frac{2n}{c}<5<\frac{2n}{b}$, implying $\ind(S)=1$.


Next assume $m = 3$. Thus  $a> \frac{b}{3}>\frac{n}{12}>24$. So $\frac{2n}{b}-\frac{2n}{c}>\frac{2(a-2)n}{3ac}> \frac{(2 \times 22)n}{(3\times 24)c}=\frac{11n}{18c}$. If $\frac{n}{c}>\frac{36}{11}$, then  $\frac{2(a-2)n}{bc}> 2$, implying $\ind(S)=1$. Next assume that  $\frac{n}{c}\leq\frac{36}{11}$. Then   $\frac{3n}{c}\leq\frac{108}{11}< 10$. Since $\frac{3n}{b}-\frac{3n}{c}\geq \frac{3(a-2)n}{3ac} > 1$. We have $\frac{3n}{c}< 10 < \frac{3n}{b}$. We next show that $10a < n$ and thus $\ind(S) =1$ (by Lemma~\ref{L4.1}). Note that $3< \frac{n}{c} <\frac{n}{b} < 4$ and $a>24$. Then $a \leq \frac{24}{22}(a-2)\leq \frac{24}{22}(c-b)< \frac{24}{22}(\frac{n}{3}-\frac{n}{4})=\frac{n}{11}$, so $10a < n$ as required.

If $m= 4$, then $a\geq 20$ and thus $\frac{2(a-2)n}{bc} > \frac{8(a-2)}{3a}> 2$. By Lemma~\ref{L4.2} $\ind(S)=1$.

{\bf Subcase 2.3.} $s=3$.  If $m\geq 4$, then $\frac{3n}{b}-\frac{3n}{c}>\frac{3(a-2)n}{4ac}>2$. It follows from Lemma~\ref{L4.2} that $\ind(S)=1$. If $m= 3$, then $\frac{2n}{b}-\frac{2n}{c}>\frac{2(a-2)n}{4ac}>1$, so by Lemma~\ref{L4.2} $\ind(S)=1$.

Next we assume that $m=2$ and $ 2< \frac{n}{c}<\frac{n}{b}<3$. If $b\leq 4a-8$, then $\frac{2n}{b}-\frac{2n}{c}\geq \frac{2(a-2)n}{4(a-2)c}>1$, so $\ind(S)=1$. Now assume that $ 4a-7 \leq b\leq 4a-1$. Thus $c\geq 5a-9$, so $n\geq 2c+1\geq 10a-17 > 9a$
(as $a>\frac{n}{12}\geq 24$). Since $\frac{3n}{b}-\frac{3n}{c}>\frac{3(a-2)}{2a}>1$, we have $6<\frac{3n}{c}< M<\frac{3n}{b}<9$.  If $M=8$, since $8a<n$, we have $\ind(S)=1$.
So we may assume $6<\frac{3n}{c}< 7<\frac{3n}{b}<8$. Thus $ \frac{2n}{c}< \frac{14}{3}< 5$. If $ \frac{2n}{c}<  5< \frac{2n}{b} $, since $5a<n$, we have $\ind(S)=1$. Thus $ 4<\frac{2n}{c}< \frac{2n}{b} < 5 .$  Since  $\frac{4n}{b}-\frac{4n}{c}>1$, we have $ 8<\frac{4n}{c}< 9<\frac{4n}{b} < 10$. Recall that $9a<n$, so by Lemma~\ref{L4.1}  $\ind(S)=1$ and we are done.

\end{proof}

\begin{lemma}\label{L4.5}
If  $$S=g^2\cdot(cg)\cdot((n-b)g)\cdot((n-a)g),$$
is a minimal zero-sum sequence such that $a\in[4,5]$, $ a\leq b<c<\frac{n}{2}$ and $s\leq 10$, then $\ind(S)=1$.
\end{lemma}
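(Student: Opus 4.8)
The plan is to reduce the statement to a single application of Lemma~\ref{L4.2}(1). Recall that for a sequence of this form the entries sum to $2n$, which forces $c=a+b-2$; in particular $c-b=a-2\in\{2,3\}$. Writing $s=\lfloor b/a\rfloor$ we have $sa\le b\le (s+1)a-1$, and hence $c=a+b-2\le (s+2)a-3$. Under the hypotheses $a\le 5$ and $s\le 10$ these give the uniform bounds $b\le 54$ and $c\le 57$, so that $c<n/2$ is in fact automatic once $n\ge 289$.

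The quantity controlling Lemma~\ref{L4.2}(1) is the length $\frac{s(a-2)n}{bc}$ of the interval $[\frac{sn}{c},\frac{sn}{b}]$. First I would bound it below, using $n\ge 289$ together with the upper bounds on $b$ and $c$:
\[
\frac{s(a-2)n}{bc}\ \ge\ \frac{289\,s(a-2)}{\big((s+1)a-1\big)\big((s+2)a-3\big)}.
\]
Then I would verify by a direct check over the at most twenty pairs $(a,s)\in\{4,5\}\times\{1,\dots,10\}$ that the right-hand side always exceeds $2$; it is decreasing in $s$, so the minimum is at $s=10$, where it equals $\tfrac{8670}{3078}\approx 2.8$ for $a=5$ and $\tfrac{5780}{1935}\approx 3.0$ for $a=4$. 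Consequently $\frac{s(a-2)n}{bc}>2$, the interval $[\frac{sn}{c},\frac{sn}{b}]$ contains two integers one of which is coprime to $n=p^{\mu}$, and (noting that $b\ge sa$ gives $\frac{sn}{b}\le n/a$, whence $Ma<n$ since $a\nmid n$) Lemma~\ref{L4.2}(1) yields $\ind(S)=1$.

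The one point to get right is that the two ingredients $n\ge 289$ and $c=a+b-2$ must be used jointly. Decoupling them---bounding $n/c\ge 289/57>5$ and $\frac{s(a-2)}{b}\ge\frac{a-2}{2a}$ separately and multiplying---gives only $\frac{s(a-2)n}{bc}>1.27$, which is too weak. The reason is that a small $s$ makes the factor $\frac{s(a-2)}{b}$ small but simultaneously forces $c$, and hence $n/c$, to be large, and conversely; respecting this correlation through the exact substitution $c=a+b-2$ (rather than the crude estimate $c<n/2$) is precisely what keeps the interval length above $2$ across the whole range. Since the cutoff $s\le 10$ sits comfortably inside the regime where Lemma~\ref{L4.2}(1) alone succeeds, the present case is uniform and requires neither the sharper estimate of Lemma~\ref{L4.2}(2) nor the ad hoc constructions behind Lemma~\ref{L4.1}.
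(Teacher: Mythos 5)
Your proof is correct and follows essentially the same route as the paper: both reduce the statement to Lemma~\ref{L4.2}(1) by bounding $\frac{s(a-2)n}{bc}$ from below using $n\ge 289$ together with $b\le (s+1)a-1$ and $c=a+b-2\le (s+1)a+2$. The only difference is bookkeeping --- the paper splits into the cases $s\le 3$ and $s\ge 4$ with decoupled bounds on $\frac{n}{c}$ and $\frac{s(a-2)}{b}$, while you keep the joint expression and verify the finitely many pairs $(a,s)$ directly.
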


\begin{proof}
Note that $b \leq sa+4$ and $c\leq (s+1)a+2$. If $s\leq 3$, then $\frac{n}{c} \geq \frac{289}{(s+1)a+2} \geq \frac{289}{22} > 13$. Thus  $\frac{s(a-2)n}{bc}>\frac{13s(a-2)}{(s+1)a}\geq   \frac{13\times 2}{8} > 2.$  If $s\geq 4$, then $\frac{n}{c} \geq \frac{289}{(s+1)a+2} \geq \frac{289}{57} > 5$. Thus  $\frac{s(a-2)n}{bc}>\frac{5s(a-2)}{(s+1)a}\geq \frac{5\times 4\times 2}{5\times 4} = 2.$ It follows from Lemma~\ref{L4.2} that $\ind(S)=1$.
\end{proof}

\begin{lemma}\label{L4.6}
If  $S=g^2\cdot(cg)\cdot((n-b)g)\cdot((n-a)g)$
is a minimal zero-sum sequence such that $a\in[4,5]$, $ a\leq b<c<\frac{n}{2}$ and $s\geq 11$, then $\ind(S)=1$.
\end{lemma}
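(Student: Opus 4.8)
The plan is to reuse the machinery of Lemma~\ref{L4.2} together with Lemma~\ref{L4.1}, exactly as in the companion Lemma~\ref{L4.5}, but now pushing the estimates through the regime $s\geq 11$ with $a\in[4,5]$. First I would record the same structural bounds $b\leq sa+4$ and $c\leq(s+1)a+2$ coming from $c=a+b-2$ and $s=\lfloor b/a\rfloor$, and I would also note the lower bound $b\geq sa$. Since $n\geq 289$ and $a\leq 5$, a value of $s\geq 11$ forces $c$ to be comparatively large relative to $a$, but crucially $a-2\geq 2$ remains bounded below, which is what drives the $(a-2)n/(bc)$ quantities appearing in Lemma~\ref{L4.2}.

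The key step is to show that the hypothesis of Lemma~\ref{L4.2}(1), namely $\frac{s(a-2)n}{bc}>2$, holds in this range, so that the two-integers-in-an-interval argument produces an $M$ coprime to $n$ with $Ma<n$, whence $\ind(S)=1$ by Lemma~\ref{L4.1}. Using $b\leq sa+4$ and $c\leq(s+1)a+2$ I would bound
\[
\frac{s(a-2)n}{bc}\;\geq\;\frac{s(a-2)n}{(sa+4)((s+1)a+2)}.
\]
Since $n\geq 289$ and $a\geq 4$, it remains to verify that $\frac{289\,s(a-2)}{(sa+4)((s+1)a+2)}>2$ for all $s\geq 11$ and $a\in\{4,5\}$. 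The worst case is $a=4$ (smallest $a-2$ relative to the denominator), where the ratio becomes $\frac{578\,s}{(4s+4)(4s+6)}$; as $s\to\infty$ this behaves like $578s/(16s^2)\to 0$, so one must check that it does not already drop below $2$ before that happens, and in fact for moderately large $s$ the simple bound (1) will fail and I will need the sharper input from the value of $n$ itself.

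This is precisely where the main obstacle lies: for large $s$ the denominator $bc$ grows like $s^2a^2$, so $\frac{s(a-2)n}{bc}$ decays in $s$ and the crude estimate $\frac{s(a-2)n}{bc}>2$ cannot hold uniformly. The resolution is that large $s$ forces $c$ itself to be large, hence $n/c$ small, but also forces $a$ small, so $Ma<n$ becomes very easy to arrange; concretely, when $s$ is large the interval $\bigl[\frac{kn}{c},\frac{kn}{b}\bigr]$ has length $\frac{k(a-2)n}{bc}$ and I would instead take $k$ to be the integer nearest $c/n$ times a suitable multiplier so that this interval has length exceeding $1$ while simultaneously $Ma<n$. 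In other words, I expect to split on $\frac{n}{c}$ just as in Lemmas~\ref{L4.3} and~\ref{L4.4}: when $\frac{n}{c}$ is bounded below by a small constant the bound~(1) or~(2) of Lemma~\ref{L4.2} applies directly, and when $\frac{n}{c}$ is close to $2$ (the only remaining possibility, since $c<\frac n2$) one checks that $a$ is forced to be large enough that $Ma<n$ holds for the relevant small multiplier $M\in\{2,3,4\}$, invoking Lemma~\ref{L4.1}. Carrying out the arithmetic for $a\in\{4,5\}$ and tracking the finitely many residual small-$s$/small-$\frac nc$ cases against the threshold $n\geq 289$ is the only delicate part, but it is entirely analogous to the casework already completed in Lemma~\ref{L4.4}.
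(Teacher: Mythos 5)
Your handling of the easy regimes matches the paper's: for $a=5$, and for $a=4$ with $\frac{n}{c}\geq 3$, one shows that if condition (1) of Lemma~\ref{L4.2} fails then $\frac{n}{c}<p-2$ and condition (2) applies, exactly as you outline. The genuine gap is the remaining case $a=4$, $2<\frac{n}{c}<\frac{n}{b}<3$, specifically when $j:=n-2c$ is small (the paper isolates $j\leq 15$). Here $c=b+2$ and $s=\lfloor b/4\rfloor\leq b/4$, so for every multiplier $k\leq s$ the interval $\bigl[\frac{kn}{c},\frac{kn}{b}\bigr]$ has length $\frac{2kn}{bc}\leq\frac{n}{2c}=1+\frac{j}{2c}$, while for $k\geq s+2$ any $M$ in the interval already violates $Ma<n$. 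Thus for small $j$ every usable interval has length at most barely above $1$: one can guarantee neither two integers in one interval nor integers in two consecutive intervals, so the length-based machinery of Lemmas~\ref{L4.1} and~\ref{L4.2} that you plan to reuse collapses entirely. Your concrete fallbacks for this case are incorrect: there is no ``relevant small multiplier $M\in\{2,3,4\}$,'' because for $k=1,2$ the intervals lie in $(2k,\,2k+\varepsilon)$ with $\varepsilon$ of order $\frac{1}{n}$ and contain no integer at all; the phrase ``$a$ is forced to be large enough'' is vacuous since $a=4$ is fixed by hypothesis; and the claimed analogy with Lemma~\ref{L4.4} fails precisely because there $s\leq 3$ forces $a>\frac{n}{12}$, which is the room that is absent when $s\geq 11$ and $a=4$. (Your asymptotic remark is also off: $n$ cannot be held at $289$ as $s\to\infty$, since $n>2b\geq 8s$; the ratio $\frac{s(a-2)n}{bc}$ does not tend to $0$ but to a limit near $1$, which is exactly why it misses the threshold $2$.)

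The paper closes this case by a technique no interval-length estimate replaces: explicit multiplication by units producing an equivalent sequence whose five least positive residues sum to exactly $n$. Concretely, for odd $j\leq 15$ it passes to $2S\sim(2g)(2g)((n-j)g)((j+4)g)((n-8)g)$, writes $n=(j+3)t+r$ (or $n=5l+k$, $n=7l+k$ when $j=1,3$), and multiplies by $t$, $t\pm1$, $2l$, or $3l$, then by $n-1$, the unit being chosen according to whether $\gcd(n,t)=1$; in every branch the resulting residues visibly sum to $(j+3)t+r=n$, whence $\ind(S)=1$ by definition. One could alternatively salvage an interval argument, but only by a positional rather than length-based choice of $k$: taking $M=2k+1$ with $\frac{n-j-4}{2(j+4)}<k<\min\bigl(\frac{n-j}{2j},\frac{n-4}{8}\bigr)$ makes $\frac{kn}{c}<M<\frac{kn}{b}$ and $4M<n$ hold even though the interval is shorter than $1$, and this window contains at least two integers, so some $M$ is coprime to $n=p^{\mu}$. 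Nothing of either sort appears in your proposal, so as written it does not establish the lemma in the critical case.
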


\begin{proof}
We divide the proof into two cases according to $a=5$ and $a=4$.

{\bf Case 1.}  $a=5$. In terms of Lemma~\ref{L4.2}, we may assume that  $\frac{s(a-2)n}{(s+1)ac} \leq 2$. Thus $\frac{3sn}{5(s+1)c} \leq 2$, so $\frac{n}{c} \leq \frac{10(s+1)}{3s} \leq \frac{120}{33} < 4 <p-2$. Since  $\frac{(s-1)(a-2)n}{bc} >\frac{(s-1)(a-2)n}{(s+1)ac} > \frac{10\times 3\times 2}{12\times 5} =1$, by  Lemma~\ref{L4.2} we have $\ind(S)=1.$


{\bf Case 2.}  $a=4$.  In terms of Lemma~\ref{L4.2}, we may assume that $\frac{s(a-2)n}{(s+1)ac} \leq 2$. Thus $\frac{n}{c} \leq \frac{8(s+1)}{2s} \leq \frac{96}{22} < 5 \leq p-2$.

If $\frac{n}{c}\geq 3$, then $\frac{(s-1)(a-2)n}{bc} >\frac{2(s-1)n}{(s+1)4c} > \frac{2\times 10\times 3}{12\times 4} >1$, by  Lemma~\ref{L4.2} we have $\ind(S)=1.$ Next as before we may assume $2<\frac{n}{c} <\frac{n}{b} <3$.




Since $2<\frac{n}{b}<3$, we have $b>\frac{n}{3}>96$ and $b\geq 97$. Assume that $n=2c+j=2b+j+4$. If $j\geq 17$, we have $2b^2+(j-10)b-7(j+4)-(2b^2+4b)=(j-14)b-7(j+4)=j(b-7)-14(b+2)=(j-14)(b-7)-14\times 9\geq 3\times 90-14\times 9=144>0$ and
\beqs \frac{(s-1)n}{b}-\frac{(s-1)n}{c}=\frac{2(s-1)n}{bc}\geq\frac{(b-7)(2b+j+4)}{2b(b+2)}=\frac{2b^2+(j-10)b-7(j+4)}{2b^2+4b}>1.\eeqs
By Lemma~\ref{L4.2} we have $\ind(S)=1$.

Next assume that $j\leq 15$ ($j$ is odd as $n$ is odd). Let $n=(j+3)t+r, r\in[0,j+2]$.

First we claim that $r< t$. Since $t=\lfloor\frac{n}{j+3}\rfloor\geq\lfloor\frac{n}{18}\rfloor\geq 16$ and $r\leq 17$. If  $r=17=t+1$, we have
$n=18\times 16+17=305$, which is not a prime power. If $r=17=t$, then $n=18\times 17+17=17\times 19$, which is  not a prime power. If $r=16=t$, then $n=18\times 16+16=16\times 19$ ,  which is  not a prime power. Hence our claim holds.

{\bf Subcase 2.1.} If $j=1$, let $n=5l+k$ with $k\in[1,4]$, so $l\geq 57$ and $\gcd(n,l)=1$. Then \beqs S&=&(g)\cdot(g)\cdot(\frac{n-j}{2}g)\cdot(\frac{n+j+4}{2}g)\cdot((n-4)g)\\
&\sim&(2g)\cdot(2g)\cdot((n-1)g)\cdot(5g)\cdot((n-8)g)\quad (\hbox{\rm multiplied by\;}2 )  \\
&\sim&(4lg)\cdot(4lg)\cdot((3l+k)g)\cdot((5l-k)g)\cdot((4l+4k)g)\quad (\hbox{\rm multiplied by\;} 2l)\\
&\sim&((l+k)g)\cdot((l+k)g)\cdot((l-k)g)\cdot((l+3k)g)\cdot((l-3k)g)\quad (\hbox{\rm multiplied by\;} n-1).
\eeqs
Since  $(l+k)+(l+k)+(l-k)+(l+3k)+(l-3k)=5l+k=n$, we have  $\ind(S)=1$.

{\bf Subcase 2.2.}  If $j=3$, let $n=7l+k$ with $k\leq 6$, then $l\geq 41> k$ and $\gcd(n,l)=1$. If $k>0$, then \beqs S&=&(g)\cdot(g)\cdot(\frac{n-j}{2}g)\cdot(\frac{n+j+4}{2}g)\cdot((n-4)g)\\
&\sim&(2g)\cdot(2g)\cdot((n-3)g)\cdot(7g)\cdot((n-8)g)\quad (\hbox{\rm multiplied by\;}2)   \\
&\sim&(6lg)\cdot(6lg)\cdot((5l+2k)g)\cdot((7l-2k)g)\cdot((4l+4k)g)\quad (\hbox{\rm multiplied by\;} 3l)\\
&\sim&((l+k)g)\cdot((l+k)g)\cdot((2l-k)g)\cdot(3kg)\cdot((3l-3k)g)\quad (\hbox{\rm multiplied by\;} n-1).
\eeqs
Since $(l+k)+(l+k)+(2l-k)+3k+(3l-3k)=7l+k=n$, we have $\ind(S)=1$.

If $k=0$, then $n$ is a power of $7$.
\beqs S&=&(g)\cdot(g)\cdot(\frac{n-j}{2}g)\cdot(\frac{n+j+4}{2}g)\cdot((n-4)g)\\
&\sim&(2g)\cdot(2g)\cdot((n-3)g)\cdot(7g)\cdot((n-8)g)\quad (\hbox{\rm multiplied by\;}2)   \\
&\sim&(6(l-1)g)\cdot(6(l-1)g)\cdot((5l+9)g)\cdot((7l-21)g)\cdot((4l+24)g)\\
&&\quad (\hbox{\rm multiplied by\;} 3(l-1))\\
&\sim&((l+6)g)\cdot((l+6)g)\cdot((2l-9)g)\cdot(21g)\cdot((3l-24)g)\\
&&\quad (\hbox{\rm multiplied by\;} n-1),
\eeqs
where $3l-24 =3(l-8)>0 $. Since $(l+6)+(l+6)+(2l-9)+21+(3l-24)=7l=n$, again we have $\ind(S)=1$.

{\bf Subcase 2.3.}
If $\gcd(n,t)=1$ and $j\geq 5$,   we have
\beqs S&=&(g)\cdot(g)\cdot(\frac{n-j}{2}g)\cdot(\frac{n+j+4}{2}g)\cdot((n-4)g)\\
&\sim&(2g)\cdot(2g)\cdot((n-j)g)\cdot((j+4)g)\cdot((n-8)g)\quad (\hbox{\rm multiplied by\;}2)   \\
&\sim&(2tg)\cdot(2tg)\cdot((3t+r)g)\cdot((t-r)g)\cdot(((j-5)t+r)g)\quad (\hbox{\rm multiplied by\;} t).
\eeqs
Since $2t+2t+(3t+r)+(t-r)+((j-5)t+r)=(j+3)t+r=n$, we have $\ind(S)=1$.

If $\gcd(n,t)>1$ and $j> 5$, we have
\beqs S&=&(g)\cdot(g)\cdot(\frac{n-j}{2}g)\cdot(\frac{n+j+4}{2}g)\cdot((n-4)g)\\
&\sim&(2g)\cdot(2g)\cdot((n-j)g)\cdot((j+4)g)\cdot((n-8)g)\quad (\hbox{\rm multiplied by\;}2)   \\
&\sim&(2(t+1)g)\cdot(2(t+1)g)\cdot((3t+r-j)g)\cdot((t-r+j+4)g)\cdot(((j-5)t+r-8)g)\\
&&\quad (\hbox{\rm multiplied by\;} (t+1)),
\eeqs
where $(j-5)t+r-9 \geq t+r-9>0$. Since $2(t+1)+2(t+1)+(3t+r-j)+(t-r+j+4)+((j-5)t+r-8= (j+3)t+r=n$, we infer $\ind(S)=1$.

If $\gcd(n,t)>1$ and $j=5$, we have
\beqs S&=&(g)\cdot(g)\cdot(\frac{n-j}{2}g)\cdot(\frac{n+j+4}{2}g)\cdot((n-4)g)\\
&\sim&(2g)\cdot(2g)\cdot((n-j)g)\cdot((j+4)g)\cdot((n-8)g)\quad (\hbox{\rm multiplied by\;}2)   \\
&\sim&(2(t-1)g)\cdot(2(t-1)g)\cdot((3t+r+j)g)\cdot((t-r-j-4)g)\cdot(((j-5)t+r+8)g)\\
&&\quad (\hbox{\rm multiplied by\;} (t-1)),
\eeqs
where $t-r-j-9 \geq t+r-9>0$. Since $2(t-1)+2(t-1)+(3t+r+j)+(t-r=j-4)+((j-5)t+r+8= (j+3)t+r=n$, again we infer $\ind(S)=1$.

\end{proof}

Now Proposition~\ref{P3.2} follows immediately from Lemmas~\ref{L4.3}, \ref{L4.4}, \ref{L4.5} and \ref{L4.6}.

\subsection{Proof of Proposition~\ref{P3.3}}.

\bigskip

In this subsection, we will provide a proof for Proposition~\ref{P3.3}. In terms of Proposition~\ref{P2.2}, from now on we may always assume that $S =g^2\cdot(cg)\cdot((n-b)g)((n-3)g)$ where $a=3\leq b< c \leq \frac{n-3}{2}$ and $n=p^{\mu}\geq 289 $ with $p\geq 7$ and $\mu \geq 2$ and show that $\ind(S)=1$.

\begin{lemma}\label{L4.7}
If $S$ is a  minimal zero-sum sequence such that  $\frac{n}{c}>4$,
then  $\ind(S)=1$.
\end{lemma}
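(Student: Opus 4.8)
The plan is to apply Lemma~\ref{L4.1} with $a=3$. Since here $b=c-1$ and $a-2=1$, its hypotheses reduce to finding an integer $M$ with $\gcd(M,n)=1$, $3M<n$, and $\frac{kn}{c}<M<\frac{kn}{c-1}$ for some $k\ge 1$. The two strict inequalities say exactly that a multiple of $n$ lies in the open interval $(M(c-1),Mc)$, which has length $M$; as $M<n/3<n$ this interval contains at most one multiple of $n$, and it contains one precisely when $0<|Mc|_n<M$ (note $|Mc|_n\ge 1$ since $\gcd(M,n)=1$ and $c<n$). Thus Lemma~\ref{L4.7} reduces to the numerical claim: \emph{when $c<n/4$ there is a unit $M<n/3$ with $|Mc|_n<M$}, i.e. an integer coprime to $p$ lying in one of the pairwise disjoint intervals $I_k=(\frac{kn}{c},\frac{kn}{c-1})$, $k=1,\dots,s$, where $s=\lfloor\frac{c-1}{3}\rfloor$ (so that $I_s\subseteq(0,n/3)$).

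First I would dispose of the case where $\frac{n}{c}$ is comfortably large by quoting Lemma~\ref{L4.2}: with $a=3$ its condition (1) becomes $\frac{sn}{(c-1)c}>2$ and its condition (2) becomes $\frac{(s-1)n}{(c-1)c}>1$ together with $\frac{n}{c}<p-2$. A short estimate using $s\ge\frac{c-3}{3}$ and $n>4c$ shows that (1) holds once $\frac{n}{c}$ exceeds a small absolute constant, while (2) covers an adjacent range (recall $p\ge 7$, so $p-2\ge 5$); this settles all but a narrow window of $\frac{n}{c}$ just above $4$. For the remaining window I would argue by counting. The intervals $I_1,\dots,I_s$ are disjoint and contained in $(0,n/3)$, and the total number $V$ of integers they contain is close to their total length $\sum_{k=1}^s\frac{kn}{c(c-1)}=\frac{n}{c(c-1)}\cdot\frac{s(s+1)}{2}\approx\frac{n}{18}$. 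On the other hand the number of non-units (multiples of $p$) in $(0,n/3)$ is at most $\lfloor\frac{n}{3p}\rfloor\le\frac{n}{21}$. Since $\frac{n}{18}>\frac{n}{21}$, the count $V$ exceeds the number of non-units in the range, so at least one integer in $\bigcup_k I_k$ is coprime to $p$; that integer is the required unit $M$, and $\ind(S)=1$ by Lemma~\ref{L4.1}.

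The delicate point, and the step I expect to be the main obstacle, is making the inequality $V>\lfloor\frac{n}{3p}\rfloor$ rigorous precisely in the boundary regime $c\to n/4$. There the longest interval $I_s$ has length $\frac{sn}{c(c-1)}\to\frac{4}{3}<2$, so no single $I_k$ is guaranteed to contain two consecutive integers (hence a unit) and Lemma~\ref{L4.2}(1) narrowly fails; moreover $s\approx\frac{c}{3}\approx\frac{n}{12}$ is comparable to the main term $\frac{n}{18}$, so the crude bound $V\ge\sum_k\bigl(\frac{kn}{c(c-1)}-1\bigr)$ is lossy and can even turn negative. To control this I would separate the indices $k$ with $\frac{kn}{c(c-1)}\ge 1$ from the smaller ones and estimate the two ranges independently, or equivalently bound the number of integers lying in the gaps between the $I_k$ inside $(0,n/3)$; alternatively one can mirror the explicit finite sub-casework of Lemmas~\ref{L4.3}--\ref{L4.6}, splitting on the integer part of $\frac{n}{c}$ and on $s$ and in each case exhibiting an admissible $k$ together with a unit in $I_k$. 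Either route rests on the same counting estimate, and since the arithmetic is only borderline near $\frac{n}{c}=4$, this is exactly why the threshold in the statement is $4$ rather than a larger constant.
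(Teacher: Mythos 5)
Your opening reduction is fine and matches the paper's: with $a=3$, $b=c-1$, Lemma~\ref{L4.1} says it suffices to find a unit $M$ with $3M<n$ lying in some interval $I_k=(\frac{kn}{c},\frac{kn}{b})$, and Lemma~\ref{L4.2}(1) disposes of large $\frac{n}{c}$ (the paper uses it for $\frac{n}{c}>8$). But the entire content of the lemma is the remaining window $4<\frac{n}{c}<8$, and there your argument has a genuine gap on two counts. First, your claim that Lemma~\ref{L4.2} leaves only ``a narrow window just above $4$'' is wrong precisely in the hardest case $p=7$: condition (2) requires $\frac{n}{c}<p-2=5$, so conditions (1) and (2) together leave the whole range $5\le\frac{n}{c}\lesssim 6$ (with safe constants, up to $8$) untouched. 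Second, the counting argument you propose for the window fails throughout it, not merely ``in the boundary regime $c\to n/4$'': the union $\bigcup_k I_k$ has total length about $\frac{n}{18}$ independently of $c$, while the number of intervals is $s\approx\frac{c}{3}>\frac{n}{24}$ everywhere in the window, so the loss of one integer per interval eats the main term (your bound $\sum_k(L_k-1)$ is already negative once $\frac{n}{c}<6$), and even the refined count over the intervals of length at least $1$ stays below the possible number of multiples of $7$ in $(0,\frac{n}{3})$, namely about $\frac{n}{21}$; carrying out the inequality shows a global count of this kind needs $\frac{n}{c}$ well above $10$ before it closes. You partly acknowledge this, but the repairs you list (``separate the indices'', ``bound the integers in the gaps'', ``mirror the sub-casework of Lemmas~\ref{L4.3}--\ref{L4.6}'') are never executed, and the last one amounts to saying ``do what the paper does''.

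The missing idea is local, not global. In the window the paper argues: either some integer $m\le 8$ satisfies $\frac{n}{c}<m<\frac{n}{b}$, in which case Lemma~\ref{L4.1} applies at once with $M=m$, $k=1$ (the one bad case $m=7=p$ is handled by replacing $m$ by $6$ when $\frac{n}{c}<6$, and otherwise by noting that $n\ge 343$ forces $s\ge 16$, whence Lemma~\ref{L4.2}(1) applies after all); or else $m<\frac{n}{c}<\frac{n}{b}<m+1$, and then $b>\frac{n}{8}$ gives $s\ge 12$, so $\frac{kn}{bc}>\frac{4(s-1)}{3(s+1)}>1$ for $k=s-1,s$, producing integers $M_{s-1}\in[\frac{(s-1)n}{c},\frac{(s-1)n}{b}]$ and $M_s\in[\frac{sn}{c},\frac{sn}{b}]$; assuming (as one may) that $\frac{sn}{bc}\le 2$ and that each interval contains only one integer, a short computation gives $0<M_s-M_{s-1}<7\le p$, so the two candidates cannot both be divisible by $p$, and whichever is a unit satisfies $3M<n$, so Lemma~\ref{L4.1} finishes. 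This ``two candidates differing by less than $p$'' device is exactly what replaces counting in the regime where counting is too lossy; without it, or a genuinely executed substitute, your proposal does not prove the lemma.
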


\begin{proof}

If $\frac{n}{c}>8$, we have
$\frac{sn}{bc}\geq\frac{sn}{3(s+1)c}>2$ (for if $s\leq 2, \frac{n}{c}>12$),
so by Lemma~\ref{L4.2} $\ind(S)=1$.

Next we assume that $4<\frac{n}{c}<8$. We divide our proof into two cases.

{\bf Case 1.}\quad $\frac{n}{c}<m<\frac{n}{b}$ for some integer $m \leq 8$.
If $\gcd(m,n)=1$, by Lemma~\ref{L4.1},  $\ind(S)=1$. If $\gcd(m,n)>1$, since $m\leq 8$, we must have $m=p=7$, so $n\geq 7^3=343$. If $\frac{n}{c}<6$, we may take $m=6$ and thus $\ind(S)=1$.  Thus we may assume $\frac{n}{c}<7<\frac{n}{b}$. Since $n<7c \leq 21(s+1)$, we have $s\geq 16$.
Since $\frac{sn}{bc}>\frac{7s}{3(s+1)}\geq \frac{112}{51}>2$,
it follows from Lemma~\ref{L4.2} that $\ind(S)=1$.

{\bf Case 2.}\quad $m<\frac{n}{c}<\frac{n}{b}<m+1$ with $m\in[4,7]$.  If $\frac{sn}{bc}> 2$, By Lemma~\ref{L4.2}  we have $\ind(S)=1$. So we may assume that
$\frac{sn}{bc}\leq 2$. Since $s\geq 12$, we have $\frac{kn}{b}-\frac{kn}{c}=\frac{kn}{bc}>\frac{4k}{3(s+1)}\geq\frac{4(s-1)}{3(s+1)}>1$ for $k=s-1, s$. Thus there exist two integers $M_{s-1} \in [\frac{(s-1)n}{c},\frac{(s-1)n}{b}]$ and $M_s\in [\frac{sn}{c},\frac{sn}{b}]$.  Note that $|M_s-M_{s_1}|\leq2+ \frac{sn}{c}-\frac{(s-1)n}{b}= 2+\frac{(c-s)n}{bc}\leq 2+\frac{(2s+3)n}{bc}\leq 2+ 4+\frac{3n}{bc}<6+\frac{n}{12c}<7$. We conclude that at least one of $M_{s-1}, M_s$ is cop-prime to $n$. Since $M_sa <n$, it follows from Lemma~\ref{L4.1} that $\ind(S)=1$.
\end{proof}

In terms of Lemma~\ref{L4.7} and its proof, we may assume that $m<\frac{n}{c}<\frac{n}{b}<m+1$ with $m\in [2,3]$.

\begin{lemma}\label{L4.8}
If $S$ is a minimal zero-sum sequence such that  $3<\frac{n}{c}<\frac{n}{b}<4$,
then  $\ind(S)=1$.
\end{lemma}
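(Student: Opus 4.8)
Since $a=3$ forces $c=b+1$, my whole strategy is to reduce to Lemma~\ref{L4.1}: it suffices to exhibit an integer $M$ with $\gcd(M,n)=1$, $3M<n$, and $\frac{kn}{c}<M<\frac{kn}{b}$ for some $k$, for then the displayed computation in Lemma~\ref{L4.1} gives $\|MS\|_g'=n$ and hence $\ind(S)=1$. Equivalently (the reformulation I would keep in mind), I must find such an $M<\frac n3$ for which the open interval $(Mb,Mc)$, whose length is $M$, contains a multiple of $n$. First I would clear the easy ranges with Lemma~\ref{L4.2}: here $a-2=1$, so $\frac{sn}{bc}>2$ is handled by part~(1), while $\frac{(s-1)n}{bc}>1$ is handled by part~(2), since in the present regime $\frac nc<4<p-2$ (as $p\ge 7$). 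This isolates the hard case $\frac{(s-1)n}{bc}\le 1$ together with $\frac{sn}{bc}\le 2$, where $s=\lfloor b/3\rfloor$.

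In the hard case I would work only with the intervals $I_k=(\frac{kn}{c},\frac{kn}{b})$ for $k=s-1$ and $k=s$: these are the only admissible ones, because $3(s+1)\ge c=b+1$ gives $\frac{(s+1)n}{c}\ge\frac n3$, so every integer of $I_{s+1}$ already violates $3M<n$, whereas $k\le s$ keeps $\frac{sn}{b}\le\frac n3$. The clean mechanism for coprimality is then a gap estimate: if integers $M_s\in I_s$ and $M_{s-1}\in I_{s-1}$ both exist, then both lie below $\frac n3$, and using $c-b=1$, $s\le b/3$ and $n<4b$,
\beqs
M_s-M_{s-1}<\frac{sn}{b}-\frac{(s-1)n}{c}=\frac{n(s+b)}{bc}<\frac{4(s+b)}{b+1}\le\frac{16b}{3(b+1)}<\frac{16}{3}<p.
\eeqs
Two integers differing by less than $p$ cannot both be divisible by $p$, so one of $M_{s-1},M_s$ is coprime to $n=p^{\mu}$; applying Lemma~\ref{L4.1} with the corresponding $k\in\{s-1,s\}$ finishes that subcase.

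The genuine obstacle — and the bulk of the work — is to guarantee that these short intervals (of length $\le 1$ and $\le 2$ in the hard case) actually contain integers; a count of the gaps $\frac{n(b-k)}{bc}$ between consecutive $I_k$ shows an integer can easily fall between $I_{s-1}$ and $I_s$, so existence is not automatic from length. I would settle existence by adapting the explicit-construction method of Lemma~\ref{L4.6}: set $j=n-3c$, write $n=e\,t+r$ for a suitable base $e$ depending on $j$, pass to the equivalent sequence obtained by multiplying $S$ by $3$ (legitimate as $\gcd(3,n)=1$) and then by $t$ (or $t\pm1$), and read off an explicit unit multiplier $M$. The decisive point is that the coefficient of $t$ in the resulting entry-sum cancels, so $MS$ automatically sums to a multiple of $n$, which the size estimates then pin to $n$ exactly; small values of $j$ are treated by direct multipliers as in Subcases~2.1--2.2 of Lemma~\ref{L4.6}, and the generic range by the $t$-multiplier, with the $\pm1$ adjustment (available because $\gcd(3,n)=1$ lets me slide the numerator $k$ across residue classes mod $p$) securing $\gcd(M,n)=1$. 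I expect this $j$-driven case bookkeeping, and verifying each constructed entry is a genuine residue in $[0,n)$ summing to $n$ rather than $2n$, to be the main source of difficulty, exactly as it was for the length-five case $a=4$.
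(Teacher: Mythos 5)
Your strategy coincides with the paper's own proof of Lemma~\ref{L4.8}: writing $n=3c+j$, the paper uses Lemma~\ref{L4.2} exactly as you do to clear the large-$j$ range (it shows $j\ge 17$ forces $\frac{(s-1)n}{bc}\ge\frac{(b-5)(3b+j+3)}{3b(b+1)}>1$, while $\frac{n}{c}<4<p-2$ holds automatically since $p\ge 7$), and it handles the remaining range by multiplying $S$ by $3$ and then by $t$ or $t\pm 1$ after writing $n=et+r$ --- precisely the mechanism you describe. The steps you do carry out are sound: the reduction to Lemma~\ref{L4.1}, the restriction to $k\in\{s-1,s\}$, and the gap estimate $M_s-M_{s-1}<\frac{sn}{b}-\frac{(s-1)n}{c}=\frac{n(s+b)}{bc}<\frac{16}{3}<p$ (using $c=b+1$, $s\le b/3$, $\frac{n}{b}<4$), which correctly guarantees that one of two candidate integers is a unit.

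The gap is that your argument stops exactly where the content of the lemma begins. For the hard case you promise ``a suitable base $e$ depending on $j$'' and defer the ``$j$-driven case bookkeeping,'' but neither the bases nor the verifications are supplied, and they are not routine. The paper needs six genuinely different constructions: $n=(j+1)t+r$ with multiplier $t$ (or $t-1$ when $\gcd(n,t)>1$) for $8\le j\le 16$, where positivity of the resulting entry $(j-8)t+r$ is exactly what forces the cutoff at $j=8$; $n=9t+r$ for $j=7$; $n=5t+r$ with multiplier $2t$ for $j=5$; $n=7t+r$ with multipliers $2t$ and then $n-1$ for $j=4$, including a separate branch when $r=0$ (i.e.\ $p=7$); $n=5t+r$ with multipliers $2t$, $4$, $n-1$ for $j=2$; and $n=4t+r$ with multipliers $t$, $n-1$ for $j=1$. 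In each case one must check $r<t$, that every constructed entry is a positive residue, that the multiplier used is a unit, and that the entries sum to $n$ rather than $2n$. Since all of this is exactly the ``main source of difficulty'' you acknowledge but do not execute, your proposal is a correct outline --- identical in structure to the paper's proof --- but not yet a proof of the case $j\le 16$, which is where the substance of Lemma~\ref{L4.8} lies.
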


\begin{proof}
Suppose that $n=3c+j$ and $n=3b+j+3$, where $1\leq j\leq b-4$ and $\gcd(j,3)=1$.
If $j\geq 17$, as in  Case 2 of Lemma~\ref{L4.6} we have
\beqs \frac{kn}{b}-\frac{kn}{c}=\frac{kn}{bc}\geq\frac{(b-5)(3b+j+3)}{3b(b+1)}>1,\eeqs
for $k=s-1,s$. By Lemma~\ref{L4.2} we have $\ind(S)=1$.

Next assume that $j\leq 16$. Note that
\beqs S&=&(g)\cdot(g)\cdot(\frac{n-j}{3}g)\cdot(\frac{n+j+3}{3}g)((n-3)g)\\
&\sim&(3g)\cdot(3g)\cdot((n-j)g)\cdot((j+3)g)\cdot((n-9)g)\quad  (\hbox{\rm multiplied by\;}3).\eeqs Let $S'=3S$. Next we show that $\ind(S)=\ind(S')=1$.

{\bf Case 1.} $8\leq j\leq 16$.  Let $n=(j+1)t+r$ with $r\in[0,j]$. Since $\frac{n}{j+1}\geq\frac{289}{17}=17$, we have $t\geq 17\geq j>r$. If $\gcd(n,t)=1$, we have
\beqs S'&\sim&(3tg)\cdot(3tg)\cdot((2t+r)g)\cdot((t-r)g)\cdot(((j-8)t+r)g)\quad  (\hbox{\rm multiplied by\;}t).\eeqs
Since $(3t)+(3t)+(2t+r)+(t-r)+((j-8)t+r)=(j+1)t+r=n$, we have $\ind(S)=1$.
If $\gcd(n,t)>1$, we have
\beqs S'&\sim&(3(t-1)g)\cdot(3(t-1)g)\cdot((t+r+j)g)\cdot((2t-r-j-3)g)\cdot(((j-8)t+r+9)g)\\
&&\quad  \hbox{\rm multiplied by\;}t-1.\eeqs
Note that $t\geq 17 \geq j+1 \geq r+2$. We always have $2t-r-j-3 > 0$ for all $j\in[8,16]$. Again,  we have
$3(t-1)+3(t=1)+(t+r+j)+(2t-r-j-3)+((j-8)t+r+9)=(j+1)t+r= n$, so $\ind(S)=1$.

{\bf Case 2.} $j=7$.  Let $n=9t+r$ with $r\in[1,8]$.  Then $t\geq 32>r$. If $\gcd(n,t)=1$, we have
\beqs S'&\sim&(3tg)\cdot(3tg)\cdot((2t+r)g)\cdot((t-r)g)\cdot(rg)\\
&&\quad  (\hbox{\rm multiplied by\;}t).\eeqs
Since $(3t)+(3t)+(2t+r)+(t-r)+r=9t+r=n$, we have  $\ind(S)=1$.

If $\gcd(n,t)>1$, then  $\gcd(n,t-1)=1$. So we have
\beqs S'&\sim&(3(t-1)g)\cdot(3(t-1)g)\cdot((2t+r+7)g)\cdot((t-r-10)g)\cdot((r+9)g)\\
&&\quad  (\hbox{\rm multiplied by\;}t-1).\eeqs
Since $(3(t-1))+(3(t-1))+(2t+r+7)+(t-r-10)+(r+9)=9t+r=n$, we again  have  $\ind(S)=1$.

{\bf Case 3.} $j=5$.  Let $n=5t+r$ with $r\in[1,4]$.  Then $\gcd(n,t)=1$ and $t\geq 57>3r$. We have
\beqs S'&\sim&((t-r)g)\cdot((t-r)g)\cdot(2rg)\cdot((t-3r)g)\cdot((2t+4r)g)\\
&&\quad  (\hbox{\rm multiplied by\;}2t).\eeqs
Since $(t-r)+(t-r)+(2r)+(t-3r)+(2t+4r)=5t+r=n$, we have $\ind(S)=1$.

{\bf Case 4.} $j=4$.  Let $n=7t+r$ with $r\in[0,6]$.  If $r\not=0$, we have  $\gcd(n,t)=1$
\beqs S'&\sim&(6tg)\cdot(6tg)\cdot((6t+2r)g)\cdot((7t-r)g)\cdot((3t+3r)g)\\
&&\quad  (\hbox{\rm multiplied by\;}2t)\\
&\sim&((t+r)g)\cdot((t+r)g)\cdot((t-r)g)\cdot(2rg)\cdot((4t-2r)g)\\
&&\quad  (\hbox{\rm multiplied by\;}n-1=7t+r-1).\eeqs
Since $(t+r)+(t+r)+(t-r)+(2r)+(4t-2r)=7t+r=n$,  $\ind(S)=1$.

If $r=0$, we have  $p=7$ and $\gcd(n,t-1)=1$
\beqs S'&\sim&(6(t-1)g)\cdot(6(t-1)g)\cdot((6t+2r+8)g)\cdot((7t-r-14)g)\cdot((3t+3r+18)g)\\
&&\quad  (\hbox{\rm multiplied by\;}2(t-1))\\
&\sim&((t+r+6)g)\cdot((t+r+6)g)\cdot((t-r-8)g)\cdot((2r+14)g)\cdot((4t-2r-18)g)\\
&&\quad  (\hbox{\rm multiplied by\;}n-1=7t+r-1).\eeqs
Again, we have $\ind(S)=1$.

{\bf Case 5.} $j=2$.  Let $n=5t+r$ with $r\in[1,4]$.  Then $\gcd(n,t)=1$. We have
\beqs S'&\sim&(tg)\cdot(tg)\cdot((t+r)g)\cdot((5t-r)g)\cdot((2t+2r)g)\\
&&\quad  (\hbox{\rm multiplied by\;}2t)\\
&\sim&(4tg)\cdot(4tg)\cdot((4t+4r)g)\cdot((5t-4r)g)\cdot((3t+7r)g)\\
&&\quad  (\hbox{\rm multiplied by\;}4)\\
&\sim&((t+r)g)\cdot((t+r)g)\cdot((t-3r)g)\cdot((4r)g)\cdot((2t-6r)g)\\
&&\quad  (\hbox{\rm multiplied by\;}n-1=5t+r-1).\eeqs
Since $(t+r)+(t+r)+(t-3r)+(4r)+(2t-6r)=5t+r=n$,  $\ind(S)=1$.

{\bf Case 6.} $j=1$.  Let $n=4t+r$ with $r\in[1,3]$.  Then $\gcd(n,t)=1$. We have
\beqs S'&\sim&(3tg)\cdot(3tg)\cdot((3t+r)g)\cdot((4t)g)\cdot((3t+3r)g)\\
&&\quad  (\hbox{\rm multiplied by\;}t)\\
&\sim&((t+r)g)\cdot((t+r)g)\cdot(tg)\cdot(rg)\cdot((t-2r)g)\\
&&\quad  (\hbox{\rm multiplied by\;}n-1=4t+r-1).\eeqs
Since $(t+r)+(t+r)+t+r+(t-2r)=4t+r=n$, we have $\ind(S)=1$.
\end{proof}

\begin{lemma}\label{L4.9}
 If $S$ is a minimal zero-sum sequence such that $2<\frac{n}{c}<\frac{n}{b}<3$,
then  $\ind(S)=1$.
\end{lemma}

\begin{proof}
We will show that there exist integers $x, y\in [1, \lfloor\frac{b}{3}\rfloor]$ such that
\begin{center}
$\frac{n}{c}  < 2+ \frac{x}{y} < \frac{n}{b}$.
\end{center}
Then $(2y+x)a  < \frac{yn}{b} 3\time 3 \le n$. If $\gcd(2y+x, n)=1$, then by Lemma~\ref{L4.1} $\ind(S)=1$ and we are done.


Suppose $n=2b+b_0$, where $1 \le b_0 \le b-1$.  Since $n$ is prime power and $c = b+1 < \frac{n-1}{2}= b + \frac{b_0-1}{2}$, we infer that
\begin{center}
$b_0 \equiv 1 \pmod 2$ and $b_0 > 3$.
\end{center}
It suffices to show there exist $x, y\in [1, \lfloor\frac{b}{3}\rfloor]$ such that \begin{center}
$\frac{b_0-2}{b+1}  < \frac{x}{y} < \frac{b_0}{b}$ and $\gcd(2y+x, n)=1$.
\end{center}

If it is not easy to determine whether or not  $\gcd(2y+x, n)=1$,  we will show that there exist  two more integers $ z, w \in [1, \lfloor\frac{b}{3}\rfloor]$ such that \begin{center}  $\frac{b_0-2}{b+1}  < \frac{z}{w}  < \frac{b_0}{b}$,
 and $1 \le |(2y+x)- (2w+z)| \le p-1$. \end{center}

 Thus either $\gcd(2y+x, n)=1$ or $\gcd(2w+y, n)=1 $, again we have $\ind(S)=1$. We divide the proof into three cases.


\medskip
\noindent {\bf Case 1.} $b \equiv 0 \pmod 3$. Since $n$ is prime power, we infer that $b_0 \not\equiv 0 \pmod 3$. Suppose $b=3s$.

If  $b_0=3t+1$, then let $x=t$ and $y=s$. We infer that $\frac{3t-1}{3s+1} < \frac{x}{y} < \frac{3t+1}{3s}$. Since $n=2b+b_0=3(2s+t)+1=3(2y+x)+1$, we have $\gcd(2y+x, n)=1$ and we are done.

If  $b_0=3t+2$, let $x=t$ and $y=s$. Then $\frac{3t}{3s+1} < \frac{x}{y} < \frac{3t+2}{3s}$. Since $n=2b+b_0=3(2s+t)+2=3(2y+x)+2$, we have $\gcd(2y+x, n)=1$ and we are done.

\medskip
\noindent {\bf Case 2.} \quad $b \equiv 1 \pmod 3$. Since $n=p^{\mu}$ with a prime $p \ge 7$, we infer that $b_0 \not\equiv 1 \pmod 3$.  Suppose $b=3s+1$. Since $289\le n < 9s+3$,  $s\ge 32$.

\noindent {\bf Subcase 2.1.} \quad $b_0=3t $. Since $b_0 \equiv 1 \pmod 2$ and $ b_0 > 3$,  $t \ge 3$.

If $s < 2t -2$, let  $x=t-1, \, y=s$. Then $\frac{3t-2}{3s+2} < \frac{x}{y}  < \frac{3t}{3s+1}$. Since $n=2b+b_0=3(2s+t)+2=3(2y+x)+5$, we have $\gcd(2y+x, n)=1$ and we are done.

Next assume that $s \ge 2t-2$. Choose $y= s- \lceil\frac{s-2t+3}{3t-2}\rceil$ and $x=t-1$. Then $\frac{3t-2}{3s+2} < \frac{x}{y} < \frac{3t}{3s+1}$. If $s \ge 3t+1$, let $w=y-1, \, z=t-1$, then $\frac{3t-2}{3s+2} < \frac{z}{w} < \frac{3t}{3s+1}$ and  $|(2y+x)- (2w+z)| =2$,  so we are done. Now assume that $$2t-2 \le s \le 3t.$$ Since $32 \le s \le 3t$, $t \ge 11$.
If $ s \le \frac{11(t-1)}{4}$, let  $y= s- \lceil\frac{s-2t+3}{3t-2}\rceil= s-1 $  and $x=t-1$ as before.  Let $w = s-  \lceil\frac{4s-2t+5}{3t-2}\rceil = s-3$
and $z=t-2$. Then $\frac{3t-2}{3s+2} < \frac{x}{y}, \frac{z}{w} < \frac{3t}{3s+1}$ and $|(2y+x)- (2w+z)| =5$,  so we are done. If $  \frac{11(t-1)}{4} < s \le 3t$, let  $y= s- 2 $, $x=t-1$,  $w =  s-4$ and $z=t-2$. Then $\frac{3t-2}{3s+2} < \frac{x}{y}, \frac{z}{w} < \frac{3t}{3s+1}$ and $|(2y+x)- (2w+z)| =5$,  so we are done.

\noindent {\bf Subcase 2.2.}\quad $b_0=3t+2$. Let $x=t$ and $y=s$. We infer that $\frac{3t}{3s+2} < \frac{x}{y} < \frac{3t+2}{3s+1}$. Since $n=2b+b_0=3(2s+t)+4=3(2y+x)+4$, we have $\gcd(2y+x, n)=1$ and we are done.

\medskip
\noindent {\bf Case 3.}\quad $b \equiv 2 \pmod 3$. Since $n=p^{\mu}$ with a prime $p \ge 7$, we infer that $b_0 \not\equiv 2 \pmod 3$.  Suppose $b=3s+2$. Since $289 \le 3b-1=9s+5$, $s\ge 32$.

\medskip
\noindent {\bf Subcase 3.1.} \quad $b_0 \equiv 0 \pmod 3$. Suppose  $b_0=3t$. Recall that $b_0=3t  \equiv 1 \pmod 2$ and $b_0>3$, we infer that  $t \ge 3$ and $ t \equiv 1 \pmod 2$.

If $s < \frac{3t -4}{2} $, then  let $x=t-1, \,y=s; \, z=t-2, \, w=s-1$. We infer that $\frac{3t-2}{3s+3} <\frac{z}{w} <  \frac{x}{y} < \frac{3t}{3s+2}$ and $|(2y+x)- (2w+z)| =3 $, so we are done. Next assume that $s \ge \frac{3t -4}{2}$. If $s < 3t-3$, then $t\ge 13$. Let $x=t-1, \,y=s; \, z=t-2, \, w=s-2$. Then $\frac{3t-2}{3s+3}<  \frac{x}{y} , \frac{z}{w} < \frac{3t}{3s+2}$ and $|(2y+x)- (2w+z)| =5 $, so we are done.

If $s \ge 3t-3$, choose $y= s- \lceil\frac{s-3t+4}{3t-2}\rceil$ and $x=t-1$. Then $\frac{3t-2}{3s+3} < \frac{x}{y} < \frac{3t}{3s+2}$.   Let $w=y-1, z=t-1$.  Note that if $s< 3t$ then $t> 11$. Thus we have  either $s \ge 3t$ or $t \ge 9$. Therefore, $\frac{3t-2}{3s+3} < \frac{z}{w} < \frac{3t}{3s+2}$. 
Since $|(2y+x)- (2w+z)| =2 $, we are done. 

\medskip
\noindent {\bf Subcase 3.2.} \quad $b_0 \equiv 1 \pmod 3$. Suppose  $b_0=3t+1$. Recall that $b_0=3t +1  \equiv 1 \pmod 2$ and $b_0 >3$. Hence $t \equiv 0 \pmod 2$ and thus $t \ge 2$.

If $s > 5t+1$, let $x=t, \, y=s; \, z=t, \, w= s-1$. Then $\frac{3t-1}{3s+3}<  \frac{x}{y} <\frac{z}{w} < \frac{3t+1}{3s+2}$ and $|(2y+x)- (2w+z)| =2 $,  so we are done. Hence we may assume that $s \le 5t+1$.

If $s \ge 3t- 2$, let  $x= z = t-1, y = s-\lceil\frac{2s-3t+4}{3t-1}\rceil, w=y-1$. If $t \ge 10$, then $\frac{3t-1}{3s+3}<  \frac{z}{w} , \frac{x}{y}  < \frac{3t+1}{3s+2}$ and $ |(2y+x)- (2w+z)| = 2$, so we are done. Hence we may assume that $t \le 9$ and $3t-2 \le s \le 5t+1$. Note that $t \equiv 0 \pmod 2$ and $s \ge 33$. Hence $t=8$ and then $33 \le s \le 41$.  Let  $x= z = t-1, y = s-3, w=s-4$. Then $\frac{3t-1}{3s+3}<  \frac{z}{w} , \frac{x}{y}  < \frac{3t+1}{3s+2}$ and $ |(2y+x)- (2w+z)| = 2$, so we are done.

If $2t < s < 3t-2$, let $x=t, \, y=s; \, z=t-1, \, w= s-1$. Then $\frac{3t-1}{3s+3}<  \frac{x}{y} , \frac{z}{w} < \frac{3t+1}{3s+2}$ and $|(2y+x)- (2w+z)| = 3$, so we are done.

Hence we may assume that $s \le 2t$. Note that $t \ge \frac{s}{2}\ge \frac{33}{2}$, so $t\ge 17.$


If $s < \frac{6t-7}{5}$, let $x=t-1, \, y=s; \, z=t-2, \, w= s-1$. Then $\frac{3t-1}{3s+3}<  \frac{z}{w} <\frac{x}{y} < \frac{3t+1}{3s+2}$ and $|(2y+x)- (2w+z)| =3 $, so we are done. Hence we may assume that $s \ge \frac{6t-7}{5}$.

If $s < \frac{3t-3}{2}$, let $x=t-1, \, y=s; \, z=t-2, \, w= s-\lceil\frac{5s-3t+3}{3t-1}\rceil$. Then $\frac{3t-1}{3s+3}<  \frac{z}{w} , \frac{x}{y} < \frac{3t+1}{3s+2}$ and $1 \le |(2y+x)- (2w+z)| \le 5 $, so we are done. Hence we may assume that $s \ge \frac{3t-3}{2}$.

Next assume that $\frac{3t-3}{2} \le s \le 2t$.

Let $x=t-1$ and $y=s-1$. We infer that $\frac{3t-1}{3s+3} < \frac{t-1}{s-1} < \frac{3t+1}{3s+2}$. Let $z=t-2, \, w= s-\lceil\frac{5s-3t+3}{3t-1}\rceil$. Then $\frac{3t-1}{3s+3}<  \frac{z}{w} , \frac{x}{y} < \frac{3t+1}{3s+2}$ and $1 \le |(2y+x)- (2w+z)| \le 5 $, so we are done.
\end{proof}

Now Proposition~\ref{P3.3} follows immediately from Lemmas~\ref{L4.7}, \ref{L4.8}, \ref{L4.9} and Proposition~\ref{P2.2}.

We remark that the case when $\mathsf h(S)=1$ is much more complicated and  $\ind(S)$ is not yet determined (even when $n=p$ is a prime number). We conclude the paper by listing this case  as an open problem.

\noindent{\bf Open Problem}. Let $S$ be a minimal zero-sum sequence of length five over a cyclic group of order $n$. Determine $\ind(S)$ when the height $\mathsf h(S)=1$.

\bigskip

\begin{center}
{ACKNOWLEDGEMENTS}
\end{center}

\vskip .6 cm
The first author is  supported  by the  NNSF of China (Grant No. 11001110 and 11271131) and {\it Jiangsu Government Scholarship for Overseas Studies}. The second author is supported by a Discovery Grant from the Natural Science and Engineering Research Council of Canada. The third  author is  supported  by the  NSFC (Grant No. 11271207 and 11301531)

\bigskip

\vskip30pt
\def\refname{\centerline{\bf REFERENCES}}

\end{document}